\documentclass[a4wide,11pt]{article}
\usepackage{amsmath,latexsym, amsfonts, amssymb}

\usepackage{amssymb, amsmath, amsfonts}
\usepackage{color}
\usepackage{epsfig,amsbsy,amsthm}
\usepackage{float,epsfig}
\usepackage{graphics}
\usepackage{appendix}
\usepackage{algpseudocode}
\usepackage{algorithm}

\newtheorem{theorem}{Theorem}[section]
\newtheorem{lemma}{Lemma}[section]

\newtheorem{corollary}[theorem]{\bf Corollary}
\newtheorem{remark}{Remark}[section]
\newtheorem{definition}{Definition}[section]
\theoremstyle{definition}
\newtheorem{example}{Example}[section]
\newcommand {\mat}[1] {\left[\begin{array}{#1}}
\newcommand {\rix}{\end{array}\right]}
\def \R{{\mathbb R}}
\def \C{{\mathbb C}}

\def \sig{\sigma}

\def \C{{\mathbb C}}

\def \lam{\lambda}
\def \sig{\sigma}

\def \diag{\mathrm{diag}}

\def \rar{\rightarrow}

\newcommand{\eproof}{\space
    {\ \vbox{\hrule\hbox{\vrule height1.3ex\hskip0.8ex\vrule}\hrule}}\par}
%
\sloppy

\title{Fiedler Linearizations of Rectangular Rational Matrix Functions}
\author{Namita Behera \footnotemark[2]~\footnotemark[1]
\and Avisek Bist\footnotemark[4]
\and Volker Mehrmann\footnotemark[3]~\footnotemark[1]
}

\begin{document}
\maketitle
\begin{abstract} Linearization is a standard approach in the computation of eigenvalues, eigenvectors and invariant subspaces of matrix polynomials and rational matrix valued functions.  An important source of linearizations  are the so called  \emph{Fiedler linearizations}, which are generalizations of the classical companion forms.  In this paper  the concept of Fiedler linearization is extended from square regular to rectangular rational matrix valued functions. The approach is applied to Rosenbrock functions arising in mathematical system theory.
\end{abstract}
\noindent

{\bf Keywords}
rectangular rational matrix valued function, rectangular matrix polynomial, Fiedler pencils, Rosenbrock function

\noindent
{\bf AMS subject classification.} 65F15, 15A21, 65L80, 34A30.

\renewcommand{\thefootnote}{\fnsymbol{footnote}}
\footnotetext[3]{
Institut f\"ur Mathematik, MA 4-5, TU Berlin, Str. des 17. Juni 136,
D-10623 Berlin, FRG.
\texttt{mehrmann@math.tu-berlin.de}.
}
\footnotetext[2]{ Department of Mathematics, Sikkim University, Sikkim-737102, India
   \texttt{nbehera@cus.ac.in}.
   Partially supported IMU-ESB program 
}
\footnotetext[1]{
Partially supported by the {\it Einstein Stiftung Berlin} through
the  Research Center {\sc Matheon} {\it Mathematics for key technologies}
in Berlin.
}

\footnotetext[4]{ Department of Mathematics, Sikkim University, Sikkim-737102, India
   \texttt{abist.21pdmt01@sikkimuniversity.ac.in}
   }
\renewcommand{\thefootnote}{\arabic{footnote}}

\section{Introduction}
Linearization is one of the standard techniques in the eigenvalue problem for matrix polynomials and rational matrix valued functions. In the last decade a variety of linearization methods have been developed in order to deal with algebraic structures, with ill-conditioning in the representation, and in order to construct efficient numerical methods. An important source of such linearizations for matrix polynomials are the so called  \emph{Fiedler pencils}, which are generalizations of the companion forms \cite{DeTDM09}. These linearizations have been extended to regular rational matrix functions in \cite{DopMQV20,PerQ22}. For singular rational matrix functions, i.e. rational matrix valued functions which are non-square or do not have full normal rank, linearization requires new techniques which we will derive in this paper. This may be used in the  \emph{rational eigenvalue  problem},
\begin{equation}\label{ratevp}
R(\lam)u=0,
\end{equation}
where $R(\lambda) \in \mathbb F(\lam)^{p,m}$, the set of $p\times m$ matrices with entries that are rational functions with coefficients in the field $\mathbb F$, where $\mathbb F=\mathbb R$ or $\mathbb F=\mathbb C$ denotes the field of real or complex numbers, respectively.  Defining the \emph{ normal rank} of $R\in \mathbb F(\lam)^{p,m}$ as the rank in the  field of rational matrix valued functions, we say that  $\lambda_0$ is an \emph{eigenvalue} of $R(\lam)$ if the rank of $R(\lambda_0)$ is smaller than the normal rank.

Rational eigenvalue problems arise in many applications,
see e.g. \cite{AhmM16,BetHMST13,ConPV89,HilMM04,HwaLWW04,MehS11,MehV04,Pla82,Sol06,Vos03} and the references therein. We consider the special situation that  
the rational matrix valued function has a representation
\begin{equation}\label{trfun}
R(\lam) := D(\lam) + C(\lam) A(\lam)^{-1} B(\lam)  \in \C(\lam)^{p, m},
\end{equation}
where,  denoting by $\mathbb C[\lam]^{p, m}$ the vector space of $p \times m$ matrix polynomials with coefficients in $\mathbb C$, we assume that    $ A(\lam) \in \mathbb{C}[\lam]^{n,n}$, $B(\lam) \in \mathbb{C}[\lam]^{n, m}$, $ C(\lam) \in \mathbb{C}[\lam]^{p, n}$, $D(\lam) \in \mathbb{C}[\lam]^{p,m}$, and $A(\lam)$ is invertible as a polynomial matrix, i.e., $\det A(\lambda)$ does not vanish identically for all $\lambda$; in the following we call such a matrix polynomial \emph{regular}.

Let $R(\lam)\in \mathbb{C}(\lam)^{p, m}$ be a rational matrix function and let 
\begin{equation*}\label{smform} 
\mathbf{SM}(R(\lam)) = \diag\left( \frac{\phi_{1}(\lam)}{\psi_{1}(\lam)}, \cdots, \frac{\phi_{k}(\lam)}{\psi_{k}(\lam)}, 0_{p-k, m-k}\right)    
\end{equation*} 
be the Smith-McMillan form~\cite{Ros70} of  $R(\lam),$
where the scalar polynomials $\phi_{i}(\lam)$ and $ \psi_{i}(\lam)$  are monic, pairwise coprime, and  $\phi_{i}(\lam)$ divides $\phi_{i+1}(\lam)$ and $\psi_{i+1}(\lam)$ divides $\psi_{i}(\lam),$ for $i= 1, 2, \ldots, k-1$.
The polynomials $\phi_1(\lam), \ldots, \phi_k(\lam)$ and $ \psi_1(\lam), \ldots, \psi_k(\lam)$ are called {\em invariant zero polynomials} and {\em invariant pole polynomials} of $R(\lam),$ respectively. Define
$\phi_{R}(\lam) := \prod _{j=1}^{k} \phi_{j}(\lam) \,\,\, \mbox{ and } \,\,\, \psi_{R}(\lam) := \prod _{j=1}^{k} \psi_{j}(\lam).$
 A complex number $ \lam $ is said to be a  zero of $R(\lam)$ if $ \phi_R(\lam) =0$ and
a complex number $ \lam$ is said to be a  pole of $R(\lam)$ if $\psi_R(\lam) =0.$

Rational matrix valued functions of the form \eqref{trfun} arise, in particular, as transfer functions in linear system theory, see e.g. \cite{Var91}. Consider a linear time invariant system on the positive half line $\R_{+}$ in the representation
\begin{eqnarray}
 0&=& A(\frac{d}{dt}) x(t) -B(\frac{d}{dt}) u(t), \nonumber \\
 y(t)  &=& C(\frac{d}{dt}) x(t) + D(\frac{d}{dt}) u(t).\label{lti-system}
\end{eqnarray}
Here, the function $u : \R_{+} \rightarrow \R^{m}$ is the input vector, $x: \R_{+} \rightarrow \R^{n}$ is the state vector, $y : \R_{+} \rightarrow \R^{p}$ is the output vector, and for $Z(\lam) =\sum_{i=0}^d Z_i \lambda^i\in \mathbb{C}[\lam]^{p, m}$ we use  $Z(\frac{d}{dt})$ to denote the differential operator $\sum_{i=0}^\ell Z_i \frac{d^i}{dt^i}$, where $\frac{d}{dt}$ denotes time-differentiation.
Performing a Laplace transformation of the system, assuming zero initial conditions, and denoting by $\hat z$ the Laplace transform of a function $z(t)$,
one obtains a linear system 
\begin{equation} \label{polsys}
\mat{c}0 \\ \hat y\rix=  S(\lam) \mat{c} \hat x\\ \hat u\rix,
\end{equation}
with matrix polynomial coefficient
\begin{equation}\label{rosmatrix}
 S(\lam ) := \left[
                            \begin{array}{c|c}
                              A(\lam) & -B(\lam) \\
                              \hline
                              C(\lam) & D(\lam) \\
                            \end{array}
                          \right] \in \C[\lam]^{(n+p),(n+m)}.
\end{equation}
If $A(\lambda)$ is regular, then one can eliminate $\hat x$ and obtains $\hat y= R(\lambda) \hat u$ with $R(\lambda)$ as in \eqref{trfun}, and $\hat x= A(\lambda)^{-1}B(\lambda) \hat u$, , see \cite{Ros70}.
 In this case  $S(\lambda)$ is called a \emph{Rosenbrock system matrix}. We prefer the terminology \emph{Rosenbrock System Matrix Polynomial (RSMP)}.

Conversely, if one has a given  rational matrix function of the form
(\ref{trfun}), then one can always interpret it as originating
from an RSMP of the form (\ref{rosmatrix}). Such rational matrix valued functions arise e.g. from realizations of frequency domain input-output data, see e.g. \cite{MayA07}, or in model order reduction, see e.g. \cite{Ant05,AntBG10,BauBF14,GugA04}.

In order to understand the system behavior, one typically considers the polynomial eigenvalue problem
\begin{equation}\label{rosevp}
 S(\lam) \mat{c} \xi \\ \eta \rix= \mat{c|c}
                              A(\lam) & -B(\lam)\\
                              \hline
                              C(\lam) & D(\lam) \rix \mat{c} \xi \\ \eta \rix =0.
\end{equation}

If $A(\lam)$ is regular, then one can split this eigenvalue problem in the rational eigenvalue problem 
\begin{equation}\label{evpR}
R(\lambda) \eta=0,  
\end{equation}
and, for an  eigenvalue/eigenvector pair  $(\lambda_0,\eta_0)$  of \eqref{evpR}, the linear system $A(\lambda_0) \xi= B(\lambda_0)\eta_0$.

From this construction it is clear that the eigenvalues of  $S(\lambda)$ are the eigenvalues of $R(\lambda)$ and that the eigenvalues of $A(\lambda)$ are the poles of $R(\lambda)$. 

Note that $p=m$, i.e. that $D$ is a square matrix polynomial, is a necessary  condition for \eqref{evpR} to be a regular rational matrix function. In this case one can apply the recently developed linearization techniques of \cite{DopMQV20,PerQ22}. 
For non-square $D$ the situation is more involved and to discuss this question is the main topic of this paper, in which we 
assume for simplicity that $B,C$ are constant matrices in $\lambda$. However, all the results can be extended (with extra technicalities) to the case that $B,C$ depend on $\lambda$. 

The polynomial eigenvalue problem (\ref{rosevp}) can also be
considered for the case of non-square or singular matrix polynomials $A(\lambda)$. 
In this case however, first a regularization procedure is necessary, which deflates the singular part. This topic has been studied in the system theoretic context in \cite{BinMMS15,CamKM12,KunM06,Sch11} and in the context of structured eigenvalue problems in \cite{ByeMX08}. However, we do not discuss this regularization procedure in this paper.

The standard approach  to  solve a general rational eigenvalue problem of the form (\ref{ratevp}) is to first clear out the denominator, by multiplying with a scalar polynomial associated with all poles of the entries of $R$. In the case of (\ref{trfun}) one can e.g.   multiply by $\det A(\lambda)$, or what is computationally more practical, to study the eigenvalue problem directly for  $S(\lambda)$.

Both approaches transform the rational problem into a \emph{polynomial eigenvalue problem} but they may lead to slightly  different results. Consider the following example from \cite{AhmM16}.
\begin{example}\label{ex1}{\rm
The rational eigenvalue problem
\[
R(\lambda)\eta:=\mat{cc}
\lambda-2 +\frac{1}{\lambda-1} &1 \\ 1 & 0
\rix \eta=0,
\]
can be considered as the transfer function associated with the  RSMP
\[
 S(\lambda) =\mat{c|cc} \lambda -1 & 1 & 0 \\
\hline
-1 &\lambda -2 & 1 \\
0 & 1 & 0 \rix.
\]
In this case $R(\lambda)$ has no finite eigenvalues and a double and defective eigenvalue
at $\infty$, while
$S(\lambda)$ has the same double and defective eigenvalue at $\infty$ and as well as the  simple finite eigenvalue $1$.

Scaling $R(\lambda)$ by  $\lambda -1$ to clear out the denominator,  the rational eigenvalue problem
is turned into the quadratic eigenvalue problem
\[
P(\lambda)x=  \mat{cc}
(\lambda-1)(\lambda-2) +1 &\lambda-1 \\ \lambda-1 & 0
\rix x=0,
\]
which still has the double and defective eigenvalue at $\infty$ but now also a double and defective eigenvalue at $1$.

Scaling out the denominator thus has created an extra defective eigenvalue which makes the problem more (than already) sensitive to perturbations, \cite{DemK86,SteS90}, while when considering the RSMP  $S(\lambda)$} this extra eigenvalue is not arising.
\end{example}

Once a rational problem has been transformed into a polynomial problem, to compute eigenvalues, eigenvectors, and invariant subspaces, one can transform the system further by converting it to an equivalent generalized linear eigenvalue problem.  There are again many different ways to perform this \emph{linearization} process. Their advantages and disadvantages with respect to conditioning (sensitivity of eigenvalues under perturbations), structure preservation (respecting symmetry or group structures), sparsity (number or zero blocks or band structure), preservation of algebraic invariants (length of Jordan chains, multiplicities etc.)  have received a lot of attention in recent years, see e.g. \cite{AntV04,Beh14,BueD11,ByeMX08,DasAlm19,DeTDM09,DeTDP13,GohLR82,GraHT11,HigLT07,HigMMT06,HigMT06,HigMT09,MacMMM06b,MacMMM06a,Tis00,VolA11}.

Let $D(\lam)=\sum_{j=0}^{d_D}\lambda^j D_j \in \C[\lam]^{p, m}$, $C\in \C^{p,  n}$, $B \in \C^{n, m}$, let $A(\lam)= \sum_{i=0}^{d_A} \lambda^{i}A_i\in {\mathbb C}[\lambda]^{n,n}$ be regular and consider the associated  rectangular RSMP  $S(\lam)$ and  the transfer function $R(\lam)$. Then the eigenvalues of $R(\lam)$ can be computed by solving the generalized eigenvalue problem for the pencil \emph{first}  and \emph{second companion form} of  $S(\lam)$, namely
\begin{equation}
\mathcal{C}_1(\lam) := \lam X+Y ,\label{firstcomp}
\end{equation} with
{\scriptsize \begin{eqnarray*}
 X &:=& \left[
    \begin{array}{cccc|cccc}
      A_{d_A} &  &  &  &  & & & \\
       & I_n &  &  &  & & & \\
       &  & \ddots &  &  & & & \\
       &  &  & I_n &  & & & \\
      \hline
       &  &  &  & D_{d_D} & & & \\
       &  &  &  &  & I_m & & \\
       &  &  &  &  & & \ddots & \\
       &  &  &  &  &  & & I_m \\
    \end{array}
  \right],\\
Y &:=& \left[\begin{array}{cccc|cccc}
A_{d_A-1} & A_{d_A-2} & \cdots & A_0 & 0 & \cdots & 0 & -B \\
   -I_n & 0 & \cdots & 0 &  & 0 &  & 0 \\
    & \ddots & \ddots & & &  & \ddots & \vdots \\
    &  & -I_n & 0 &  &  &  & 0 \\
    \hline
    0 & \cdots & 0 & C & D_{d_D-1} & D_{d_D-2} & \cdots & D_0 \\
    & 0 &  & 0 & -I_m & 0 & \cdots & 0 \\
    &  & \ddots & \vdots &  & \ddots & \ddots & \vdots \\
    &  &  & 0 &  &  & -I_m & 0 \\
    \end{array}
\right],
\end{eqnarray*}}
respectively, \begin{equation}
\mathcal{C}_2(\lam) = \lam X_2 +Y_2 \label{secondcomp}    
\end{equation} with
\begin{eqnarray*}
&& X:=  \left[
                         \begin{array}{cccc|cccc}
                           A_{d_A} &  &  &  & & & & \\
                            & I_{n} &  &  & & & & \\
                            &  & \ddots &  & & & & \\
                            &  &  & I_{n} & & & & \\
                            \hline
                            &  &  &  & D_{d_D} & & & \\
                            &  &  &  &  & I_p & & \\
                            &  &  &  &  & & \ddots & \\
                            &  &  &  &  & &  & I_p \\
                         \end{array}
                       \right],\nonumber\\
                       &&  Y:= \left[
                \begin{array}{cccc|cccc}
                  A_{d_A-1} & -I_{n} &  &  & 0 & & & \\
                  A_{d_A-2} & 0 & \ddots &  & \vdots & & & \\
                  \vdots & \ddots &  & -I_{n} & 0 & & \ddots & \\
                  A_{0} & \cdots & 0 & 0  & -B &  0 & \cdots & 0 \\
                  \hline
                  0  &  &  &   &  D_{d_D-1} & -I_p & \cdots & 0 \\
                  \vdots  &  \ddots  &  &   &  D_{d_D-2} & & & \vdots\\
                  0  &  &   \ddots &   &  \vdots & & & -I_p\\
                  C  & 0 & \cdots &  0 &  D_0 & 0 & \cdots & 0\\
                \end{array}
              \right].\nonumber
\end{eqnarray*}

For rational eigenvalue problems, in \cite{DopMQV20} the problem is considered as a multivariable state space system and a definition for local linearizations of rational matrices is presented. This definition allows  to introduce matrix pencils associated to a rational matrix function that preserve its structure of zeros and poles in subsets of any algebraically closed field and also at infinity.
Recently, in  \cite{AmpDMZ18, DMQD22, DMQD23, PerQ22}, different linearizations of  the RSMP  $S(\lambda)$ in  (\ref{rosmatrix}) were studied. In \cite{AmpDMZ18}    strong linearizations of arbitrary rational matrices are discussed,  as well as characterizations of  linear matrix pencils for general rational matrices which are explicitly constructed from a minimal state-space realization of   a rational matrix. 
For these linearizations of arbitrary rational matrices also the  recovery of eigenvectors has been analyzed when  $S(\lambda)$ is regular, as well as minimal bases and minimal indices, when  $S(\lambda)$ is singular, see \cite{AmpDMZ18,AmpDMZ21}. 
Also, in \cite{DMQD23}  a new family of linearizations of rational matrices,  called block full rank linearizations has been studied. It is not clear from \cite{AmpDMZ18, DMQD22, DMQD23} whether the  Fiedler pencils of  $S(\lambda)$ which are studied in this paper are block permutationally similar to block minimal pencils.

It is also not at all clear whether the linearizations studied in \cite{AmpDMZ18, DMQD23, DMQD22, PerQ22}  include our Fiedler linearizations of RSMP. Therefore,  we study explicitly the Fiedler linearizations (which indeed are easy to construct from the coefficients of a rational matrix) of rectangular rational matrices associated to multivariable state-space systems. We also study the relationship between the eigenvalues of a rational eigenvalue problem given either in the form of a transfer function (\ref{trfun}), or its polynomial representation as an RSMP,  and different associated linearizations. This problem has been studied for square rational matrix functions in \cite{Beh14,SuB11} and we will extend these results to the rectangular case. 

The paper is organized as follows. In Section~\ref{sec:SQFP}  we recall the definition and some properties of Fiedler pencils for square RSMPs from \cite{AlaB16,Beh14,BehB22}. In Section \ref{sec:RFP} we define Fiedler pencils for rectangular RSMPs and in Section~\ref{sec:LFP} we prove that these are really linearizations for rectangular system matrices. Section~\ref{sec:concl} presents conclusions.

\section{Fiedler pencils for square Rosenbrock system matrix polynomials}\label{sec:SQFP}
In this section we recall the linearization of square RSMPs  with $B,C$ constant in $\lambda$, i.e. for RSMPs of the form
\begin{equation} \label{smpq}
 S(\lambda) = \left[ {\begin{array}{c|c}
A(\lambda) & -B \\
\hline
C & D(\lambda) \\
\end{array}}\right] \in {\mathbb C}[\lambda]^{n+m,n+m}
\end{equation}
and the associated transfer function
\[
R(\lambda) = D(\lambda) + C A(\lambda)^{-1}B \in {\mathbb C}(\lambda)^{m,m},
 \]
where $A(\lambda) = \sum_{i=0}^{d_A} \lambda^{i}A_i\in {\mathbb C}[\lambda]^{n,n}$  is a regular matrix polynomial of degree $d_A$ and
$D(\lambda) = \sum_{j=0}^{d_D}\lambda^j D_j\in {\mathbb C}[\lambda]^{m,m}$ is of degree $d_D$.

To these two matrix polynomials $A(\lam)$ and $D(\lam)$, we may apply the well-known linearization technique of \cite{DeTDM09}, via  \emph{Fiedler matrices}  $M_{i}$, $i= 0, 1, \ldots, d_A$, associated with $A(\lam)\in {\mathbb C}[\lambda]^{n,n}$, defined by
\begin{eqnarray}\label{imfp}
M_{d_A} &:=& \left[
          \begin{array}{cc}
            A_{d_A} &  \\
             & I_{(d_A-1)n} \\
          \end{array}
        \right],\ M_{0} := \left[
                   \begin{array}{cc}
                     I_{(d_A-1)n} &  \\
                      & -A_{0} \\
                   \end{array}
                 \right], \nonumber
\\
M_{i} &:=& \left[
  \begin{array}{cccc}
    I_{(d_A-i-1)n} &  & & \\
     & -A_{i} & I_{n} & \\
     & I_{n} & 0  &  \\
     &   &   & I_{(i-1)n}\\
  \end{array}
\right], \,\, i= 1, \ldots, d_A-1, 
\end{eqnarray}
and those associated with $D(\lam)\in {\mathbb C}[\lambda]^{m,m}$  defined by
\begin{eqnarray}
N_{d_D} &:=& \left[
          \begin{array}{cc}
            D_{d_D} &  \\
             & I_{(d_D-1)m} \\
          \end{array}
        \right],\ N_{0} := \left[
                   \begin{array}{cc}
                     I_{(d_D-1)m} &  \\
                      & -D_{0} \\
                   \end{array}
                 \right], \nonumber \\
                 \label{0nnfq}
N_{i} &:=& \left[
  \begin{array}{cccc}
    I_{(d_D-i-1)m} &  & & \\
     & -D_{i} & I_{p} & \\
     & I_{m} & 0  &  \\
     &   &   & I_{(i-1)m}\\
  \end{array}
\right], \,\, i= 1, \ldots, d_D-1.
\end{eqnarray}

Let $d=\max\{d_A,d_D\}$ and $r=\min\{d_A,d_D\}$ and introduce
$(d_A n+d_D m) \times (d_A n+d_D m)$
matrices $\mathbb{M}_0, \ldots, \mathbb{M}_{d}$ via
\begin{eqnarray}\nonumber
\mathbb{M}_0 &=& \left[
                        \begin{array}{c|c}
                          M_0 & (e_{d_A} e_{d_D}^{T}) \otimes  B\\
                          \hline
                          -(e_{d_D} e_{d_A}^{T})\otimes C & N_0 \\
                        \end{array}
                      \right] \\ \nonumber
\mathbb{M}_{d} &:=& \left[
                        \begin{array}{c|c}
                          M_{d_A} &  \\
                          \hline
                           & N_{d_D} \\
                        \end{array}
                      \right], \\
\mathbb{M}_i &:=& \left[
              \begin{array}{c|c}
                M_i &  \\
                \hline
                 & N_i \\
              \end{array}
            \right],\ i=1,\ldots,r-1,\label{fmbeh}
\end{eqnarray}
and
\begin{eqnarray}\nonumber
\mathbb{M}_i &:=& \left[
                   \begin{array}{cccc|c}
                     I_{(d_A-i-1)n} &  &  &  &  \\
                      & -A_i & I_n &  &  \\
                      & I_n & 0 &  &  \\
                      &  &  & I_{(i-1)n} &  \\
                     \hline
                      &  &  &  & I_{d_D m} \\
                   \end{array}
                 \right] \\ 
                 &=& \left[
              \begin{array}{c|c}
                M_i &  \\
                \hline
                 & I_{d_D m} \\
              \end{array}
            \right],\ i= r, r+1,\ldots,d_A-1, \ \mbox{\rm if }\; d_D < d_A,\label{fmbeh1}\\
\mathbb{M}_i &:=& \left[
                   \begin{array}{c|cccc}                     I_{d_A n} &  &&&\\
                           \hline
         & I_{(d_D-i-1)m} &  &  &  \\
         &  & -D_i & I_p &  \\
         &  & I_m & 0 &  \\
         &  &  &  & I_{(i-1)m} \\
    \end{array}
  \right] \nonumber \\
                 &=& \left[
              \begin{array}{c|c}
                I_{d_A n} &  \\
                \hline
                 & N_i\\
              \end{array}
            \right],\ i=d_A, d_A+1,\ldots,d_D-1, \ \mbox{if}\; d_D> d_A.\label{fmbeh2}
\end{eqnarray}
Observe that $\mathbb{M}_i \mathbb{M}_j = \mathbb{M}_j \mathbb{M}_i$  for  $ |i-j| > 1$ and all $\mathbb{M}_i$ (except possibly $\mathbb{M}_0$, $\mathbb M_{d}$) are invertible. Using these Fiedler matrices $\mathbb M_i$, then in \cite{BehB22} Fiedler pencils for RSMPs were defined as follows.
\begin{definition}\label{def:fiedlerS}
Consider an RSMP  $S(\lambda)$ as in (\ref{smpq}) and let $d=\max\{d_A,d_D\}$, $r=\min\{d_A,d_D\}$.
Let $\mathbb{M}_{0}, \ldots, \mathbb{M}_{d}$ be Fiedler matrices associated with $S(\lam)$ as in \eqref{fmbeh}, \eqref{fmbeh1}, and \eqref{fmbeh2}. Given any bijective map $\sigma: \{0, 1, \ldots, d-1\} \rightarrow \{1, 2, \ldots, d\}$, the matrix pencil
\begin{equation}
\mathbb{L}_{\sigma}(\lambda) := \lambda \mathbb{M}_{d}- \mathbb{M}_{\sigma^{-1} (1)}\cdots \mathbb{M}_{\sigma^{-1} (d)} =: \lambda \mathbb{M}_{d}- \mathbb{M}_{\sigma} \label{fps}
\end{equation}
is  called {\em Fiedler pencil} of  $S(\lam)$, respectively that of  $R(\lam),$ where $\sigma(i)$ describes the position of the factor $\mathbb{M}_{i}$ in the product $\mathbb{M}_{\sigma}$; i.e., $\sigma(i) = j$ means that $\mathbb{M}_{i}$ is the $j$th factor in the product.
\end{definition}
An $n \times n$ matrix polynomial $A(\lam)$ is said to be unimodular if the determinant of $A(\lam)$ is a nonzero constant for all $\lam \in \C$. 

In the next section we extend this construction of Fiedler pencils to the rectangular case.

\section{Fiedler pencils for rectangular Rosenbrock system matrix polynomials}\label{sec:RFP}
In this section we extend the results from \cite{AlaB16,Beh14} on Fiedler pencils for transfer functions of the form (\ref{trfun}) to rectangular RSMPs. 

Let $D(\lam)=\sum_{j=0}^{d_D}\lambda^j D_j \in \C[\lam]^{p, m}$, $C\in \C^{p,  n}$, $B \in \C^{n, m}$, let $A(\lam)= \sum_{i=0}^{d_A} \lambda^{i}A_i\in {\mathbb C}[\lambda]^{n,n}$ be regular and consider the associated  rectangular RSMP  $S(\lam)$ and  the transfer function $R(\lam)$.

The most simple way to perform a direct linearization of  $S(\lam)$ is to consider a \emph{first companion form}  $\mathcal{C}_1(\lam) $ and \emph{second companion form} $\mathcal{C}_2(\lam)$ given in (\ref{firstcomp}) and (\ref{secondcomp}), respectively.

To illustrate the differences  with the square case when introducing Fiedler pencils for rectangular  transfer functions $R(\lambda)$ we consider some examples.
\begin{example}\label{ex3}
Let $R(\lam) = D(\lam) + CA(\lam)^{-1}B \in  \C(\lam)^{p, m},$ where $A(\lam) = A_0+\lam A_1 + \lam^2 A_2 + \lam^3 A_3\in  \C[\lam]^{n, n}$  and $D(\lam)=  \lam D_1+D_0 \in \C[\lam]^{p, m}$. Let $\sig_1 = (1, 3, 2)$ and $\sig_2 = (2, 3, 1)$ be bijections from $\{0, 1, 2\}$ to $\{1, 2, 3\}$. 
To form the Fiedler pencils $\mathbb{L}_{\sig_1}(\lam) = \lam \mathbb{M}_3 - \mathbb{M}_0 \mathbb{M}_2 \mathbb{M}_1$ and $\mathbb{L}_{\sig_2}(\lam) = \lam \mathbb{M}_3 - \mathbb{M}_2 \mathbb{M}_0 \mathbb{M}_1$,
if $R(\lam)$ is square, i.e., $p = m $, then we can proceed as in Definition~\ref{def:fiedlerS},  and by using the commutativity relation we obtain $\mathbb{L}_{\sig_1}(\lam)= \mathbb{L}_{\sig_2}(\lam)$.

However, if  $R(\lam)$ is  non-square i.e., if $p \neq m$, then in general we cannot use the same construction, since in this case the matrix products may not be well defined.
To overcome this difficulty,  for each bijection $\sig$, we define the Fiedler matrices in a different way so that the products in
$\mathbb{M}_{\sig}$ are always well defined.

To obtain that $\mathbb{L}_{\sig_1}(\lam) = \mathbb{L}_{\sig_2}(\lam) $, for $\mathbb{L}_{\sig_1}(\lam)=\lambda \mathbb {M}_3-\mathbb{M}_0\mathbb{M}_2 \mathbb{M}_1$ we may
use
\begin{eqnarray*}\mathbb{M}_0 &=& \left[
                   \begin{array}{ccc|c}
                    I_n & 0 & 0 & 0_{n \times m} \\
                     0 & I_n & 0 & 0_{n \times m} \\
                     0 & 0 & -A_0 & B \\
                     \hline
                     0_{p \times n} & 0_{p \times n} & -C & -D_0 \\
                   \end{array}
                 \right], \
\mathbb{M}_1 = \left[
                   \begin{array}{ccc|c}
                    I_n & 0 & 0 & 0_{n \times m} \\
                     0 & -A_1 & I_n & 0_{n \times m} \\
                     0 & I_n & 0 & 0_{n \times m} \\
                     \hline
                     0_{m \times n} & 0_{m \times n} & 0_{m \times n} & I_{m} \\
                   \end{array}
                 \right],\\
\mathbb{M}_2 &=& \left[
                   \begin{array}{ccc|c}
                    -A_2 & I_n & 0 & 0_{n \times m} \\
                     I_n & 0 & 0 & 0_{n \times m} \\
                     0 & 0 & I_n & 0_{n \times m} \\
                     \hline
                     0_{m \times n} & 0_{m \times n} & 0_{m \times n} & I_m \\
                   \end{array}
                 \right],
                 \end{eqnarray*}
and for $\mathbb{L}_{\sig_2}(\lam) = \lam \mathbb{M}_3 - \mathbb{M}_2 \mathbb{M}_0 \mathbb{M}_1$, we may use
\begin{eqnarray*}\mathbb{M}_0 &=& \left[
                   \begin{array}{ccc|c}
                    I_n & 0 & 0 & 0_{n \times m} \\
                     0 & I_n & 0 & 0_{n \times m} \\
                     0 & 0 & -A_0 & B \\
                     \hline
                     0_{p \times n} & 0_{p \times n} & -C & -D_0 \\
                   \end{array}
                 \right],\
\mathbb{M}_1 = \left[
                   \begin{array}{ccc|c}
                    I_n & 0 & 0 & 0_{n \times m} \\
                     0 & -A_1 & I_n & 0_{n \times m} \\
                     0 & I_n & 0 & 0_{n \times m} \\
                     \hline
                     0_{m \times n} & 0_{m \times n} & 0_{m \times n} & I_m \\
                   \end{array}
                 \right],\\
\mathbb{M}_2 &=& \left[
                   \begin{array}{ccc|c}
                    -A_2 & I_n & 0 & 0_{n \times p} \\
                     I_n & 0 & 0 & 0_{n \times p} \\
                     0 & 0 & I_n & 0_{n \times p} \\
                     \hline
                     0_{p \times n} & 0_{p \times n} & 0_{p \times n} & I_p \\
                   \end{array}
                 \right]. 
\end{eqnarray*}
\end{example}

\begin{example}\label{ex2}
Let $R(\lam) = D(\lam) + CA(\lam)^{-1}B \in \C(\lam)^{p, m}$ with $A(\lam) = A_0+\lam A_1 + \lam^2 A_2 + \lam^3 A_3\in \C[\lam]^{n, n}$ and let $D(\lam) = D_0+\lam D_1 + \lam^2 D_2 \in \C[\lam]^{p, m}$. Let $\sig_1 = (1, 3, 2)$ and $\sig_2 = (2, 3, 1)$. 
Then $\mathbb{L}_{\sig_1}(\lam) = \lam \mathbb{M}_3 - \mathbb{M}_0 \mathbb{M}_2 \mathbb{M}_1$ and $\mathbb{L}_{\sig_2}(\lam) = \lam \mathbb{M}_3 - \mathbb{M}_2 \mathbb{M}_0 \mathbb{M}_1$.

If $R(\lam)$ is rectangular, then, as in Example~\ref{ex1},  we have to construct the Fiedler matrices in such a way that the matrix products $\mathbb{M}_0 \mathbb{M}_2 \mathbb{M}_1$ and $\mathbb{M}_2 \mathbb{M}_0 \mathbb{M}_1$ are defined.
We can e.g. use

\begin{eqnarray*}
\mathbb{M}_0 &=& \left[
                   \begin{array}{ccc|cc}
                    I_n & 0 & 0 & 0 & 0_{n \times m} \\
                     0 & I_n & 0 & 0 & 0_{n \times m} \\
                     0 & 0 & -A_0 & 0 &  B \\
                     \hline
                      0 & 0 & 0 & I_m &  0 \\
                     0_{p \times n} & 0_{p \times n} & -C & 0 & -D_0 \\
                   \end{array}
                 \right],\\
\mathbb{M}_1 &=& \left[
                   \begin{array}{ccc|cc}
                    I_n & 0 & 0 & 0_{n \times m} & 0 \\
                     0 & -A_1 & I_n & 0_{n \times m} & 0 \\
                     0 & I_n & 0 & 0_{n \times m} & 0 \\
                     \hline
                      0 & 0 & 0 & -D_1 &  I_p \\
                     0_{m \times n} & 0_{m \times n} & 0_{m \times n} & I_m & 0 \\
                   \end{array}
                 \right],\\
\mathbb{M}_2 &=& \left[
                   \begin{array}{ccc|cc}
                    -A_2 & I_n & 0 & 0 & 0_{n \times m} \\
                     I_n & 0 & 0 & 0 & 0_{n \times m} \\
                     0 & 0 & I_n & 0 & 0_{n \times m} \\
                     \hline
                       0 & 0 & 0 & I_m &  0 \\
                     0_{m \times n} & 0_{m \times n} & 0_{m \times n}& 0 & I_m \\
                   \end{array}
                 \right]. 
\end{eqnarray*}
Then $\mathbb{M}_0\mathbb{M}_2 \mathbb{M}_1$ is well defined. Similarly, for $\mathbb{L}_{\sig_2}(\lam) = \lam \mathbb{M}_3 - \mathbb{M}_2 \mathbb{M}_0 \mathbb{M}_1$ we may use
\begin{eqnarray*}\mathbb{M}_0 &=& \left[
                   \begin{array}{ccc|cc}
                    I_n & 0 & 0 & 0 & 0_{n \times m} \\
                     0 & I_n & 0 & 0 & 0_{n \times m} \\
                     0 & 0 & -A_0 & 0 &  B \\
                     \hline
                      0 & 0 & 0 & I_p &  0 \\
                     0_{p \times n} & 0_{p \times n} & -C & 0 & -D_0 \\
                   \end{array}
                 \right],  \\
\mathbb{M}_1 &=& \left[
                   \begin{array}{ccc|cc}
                    I_n & 0 & 0 & 0_{n \times m} & 0 \\
                     0 & -A_1 & I_n & 0_{n \times m} & 0 \\
                     0 & I_n & 0 & 0_{n \times m} & 0 \\
                     \hline
                      0 & 0 & 0 & -D_1 &  I_p \\
                     0_{m \times n} & 0_{m \times n} &0_{m \times n} & I_m & 0 \\
                   \end{array}
                 \right],\\
                 \mathbb{M}_2 &=& \left[
                   \begin{array}{ccc|cc}
                    -A_2 & I_n & 0 & 0 & 0_{n \times p} \\
                     I_n & 0 & 0 & 0 & 0_{n \times p} \\
                     0 & 0 & I_n & 0 & 0_{n \times p}\\
                     \hline
                       0 & 0 & 0 & I_p &  0 \\
                     0_{p \times n} & 0_{p \times n} & 0_{p \times n}& 0 & I_p \\
                   \end{array}
                 \right].
\end{eqnarray*}
Note that the sizes of $\mathbb{M}_2$ and $\mathbb{M}_0$ in $\mathbb{L}_{\sig_1}(\lam)$ are different from the sizes of the corresponding $\mathbb{M}_2$ and $\mathbb{M}_0$ in $\mathbb{L}_{\sig_2}(\lam)$. 
\end{example}

These examples indicate that to define Fiedler pencils for a rectangular RSMP $S(\lam)$, the sizes of the matrices $\mathbb{M}_i$ have to depend on the specific choice of the bijection $\sig$. To do this, we use of the following definition from \cite{DeTDM09}.  We use the notation $(t : p)$ for ordered tuple of indices consisting of consecutive integers from $t$ to $p$.
\begin{definition} \label{def:consinv}
Let $\sig: \{0, 1, \ldots, k-1\} \rightarrow \{1, \ldots, k\}$ be a bijection.
\begin{itemize}
\item[(a)] For $i = 0, \ldots, k-2$, we say that $\sig$ has a consecution at $i$ if $\sig(i) < \sig(i+1)$, and that $\sig$ has an inversion at $i$ if $\sig(i) > \sig(i+1)$

\item[(b)] Denote by  $\mathfrak{c}(\sig)$ the total number of consecutions in $\sig$, and by $\mathfrak{i}(\sig)$ the total number of inversions in $\sig$.

\item[(c)] For $i \leq j$, we denote by  $\mathfrak{c}(\sig(i:j))$ the total number of consecutions that $\sig$ has at $i, i+1, \ldots, j$, and by 
$\mathfrak{i}(\sig(i:j))$ the total number of inversions that $\sig$ has at $i, i+1, \ldots, j. $ Observe that  $\mathfrak{c}(\sig) = \mathfrak{c}(\sig(0:k-2))$ and $\mathfrak{i}(\sig) = \mathfrak{i}(\sig(0:k-2))$.

\item[(d)] The consecution-inversion structure sequence of $\sig$, denoted by CISS$(\sig)$, is the tuple $(c_1, i_1, c_2, i_2, \ldots, c_l, i_l),$ where $\sig$ has $c_1$ consecutive consecutions at $0, 1, \ldots, c_1-1;$ $i_1$ consecutive inversions at $c_1, c_1+1, \ldots, c_1+i_1-1$ and so on, up to $i_l$ inversions at $k-1-i_l, \ldots, k-2. $
\end{itemize}
\end{definition}

Then we introduce the following algorithm (using {MATLAB} notation) for the construction  of Fiedler pencils for  general RSMPs for the case $d_A> d_D$. The case $d_A = d_D$ is treated in Remark~\ref{rem:equaldegree}.
\begin{algorithm}[H]
\caption{Construction of $\mathbb{M}_{\sigma}$ for $\mathbb{L}_{\sigma}(\lam) := \lam \mathbb{M}_{d_{A}}- \mathbb{M}_{\sigma}$.}
\label{alg1}

\textbf{Input}: $ S(\lam) = \left[
                              \begin{array}{c|c}
                                \sum \limits_{i=0}^{d_A}\lam^{i}A_{i} & -B \\
                                \hline
                                C & \sum \limits_{i=0}^{d_D}\lam^{i}D_{i}\\
                              \end{array}
                            \right]$ and a bijection $\sigma :\{0, 1, \ldots, d_{A}-1\} \rar \{1, 2, \ldots, d_{A}\}$. \\
\textbf{Output}: { $\mathbb{M}_{\sigma}$ }
\begin{algorithmic}

\If{$\sigma$ has a consecution at $0$}
    \State $\mathbb{W}_0 := \left[
                    \begin{array}{cc|cc}
                      -A_{1} & I_n & 0 & 0 \\
                      -A_0 & 0 & B & 0 \\
                      \hline
                      0 & 0 & -D_1 & I_p \\
                      -C & 0 & -D_0 & 0 \\
                    \end{array}
                  \right]$

\Else
    \State $\mathbb{W}_0 := \left[
                    \begin{array}{cc|cc}
                      -A_{1} & -A_0 & 0 & B \\
                       I_n & 0 & 0 & 0 \\
                      \hline
                      0 & -C & -D_1 & -D_0 \\
                      0 & 0 & I_m & 0 \\
                    \end{array}
                  \right]$

\EndIf

\For{$i = 1:d_{D}-2$}

\If{$\sigma$ has a consecution at $i$}
 \State {\scriptsize $\mathbb{W}_i :=  \left[
	\begin{array}{ccc|c}
		-A_{i+1} & I_{n} &  0  & \\
		\mathbb{W}_{i-1}(1:i+1,1) & 0 & \mathbb{W}_{i-1}(1:i+1,2:i+1) & W_{12}  \\
		\hline
		0 & 0 &  0 & \\
		\mathbb{W}_{i-1}(2+i:2i+2, 1) & 0 & \mathbb{W}_{i-1}(2+i:2i+2,2:i+1)    & W_{22}  \\  
	\end{array}
	\right]$},   where  \\
\qquad\qquad{\scriptsize$W_{12}= \left[
	\begin{array}{ccc}
		0 & 0 &  0 \\
		\mathbb{W}_{i-1}(1:i+1, i+2) & 0 & \mathbb{W}_{i-1}(1:i+1, i+3:2i+2)  \\
	\end{array}
	\right]$}, \\ 
\qquad\qquad{\scriptsize $W_{22}= \left[
	\begin{array}{ccc}
		-D_{i+1} & I_{p} &  0     \\
		\mathbb{W}_{i-1}(2+i:2i+2,i+2) & 0 & \mathbb{W}_{i-1}(2+i:i2+2, i+3:2i+2)  \\
	\end{array}
	\right]$.} 
\Else
    \State {\scriptsize$\mathbb{W}_i := \left[
           \begin{array}{cc|cc}
             -A_{i+1} & \mathbb{W}_{i-1}(1,1:i+1) & 0 & \mathbb{W}_{i-1}(1,2+i:2i+2) \\
             I_{n} & 0  & 0 &  0 \\
             0 & \mathbb{W}_{i-1}(2:i+1,1:i+1) & 0 & \mathbb{W}_{i-1}(2:i+1,2+i:2i+2) \\
             \hline
             0 & \mathbb{W}_{i-1}(i+2,1:i+1) & -D_{i+1} & \mathbb{W}_{i-1}(i+2,2+i:2i+2) \\
             0 &  0 & I_{m} &   0    \\
             0 & \mathbb{W}_{i-1}(i+3:2i+2,1:i+1) & 0  & \mathbb{W}_{i-1}( i+3:2i+2,2+i:2i+2)  \\
           \end{array}
         \right] $}
\EndIf

\EndFor

\For{$i = d_{D}-1: d_{A}-2$}

\If{$\sigma$ has a consecution at $i$}
    \State $\mathbb{W}_i :=  \left[
       \begin{array}{cccc}
         -A_{i+1} & I_{n} & 0 & 0  \\
         \mathbb{W}_{i-1}(:,1) & 0 & \mathbb{W}_{i-1}(:,2:i+1) & \mathbb{W}_{i-1}(:,i+2:d_{D}+i+1)  \\
       \end{array}
     \right]$

\Else
    \State $\mathbb{W}_i := \left[
           \begin{array}{cc}
             -A_{i+1} & \mathbb{W}_{i-1}(1,:)   \\
             I_{n} & 0    \\
             0 & \mathbb{W}_{i-1}(2:i+1, :)  \\
             0 & \mathbb{W}_{i-1}(i+2:d_{D}+i+1, :)   \\
           \end{array}
         \right] $
\EndIf

\EndFor

\State $\mathbb{M}_{\sigma} := \mathbb{W}_{d_{A}-2}$

\end{algorithmic}
\end{algorithm}

The construction in  Algorithm~1 
leads to the following properties.
\begin{lemma}\label{cowi}
Let  $S(\lam)$ be as in (\ref{rosmatrix}) with $A(\lam) = \sum\limits_{i=0}^{d_A}\lam^{i}A_i\in \C[\lam]^{n, n}$ and $D(\lam) = \sum \limits_{i=0}^{d_D}\lam^{i}D_{i} \in \C[\lam]^{p, m}$ with $d_A> d_D$.  Let $\sig$ be a bijection and let $\mathbb{W}_0, \mathbb{W}_1, \ldots, \mathbb{W}_{d_{A}-2} $  be the matrices provided by Algorithm~\ref{alg1}.
\begin{itemize}
\item[(a)] For $i = 0:d_{D}-2$, the size of $\mathbb{W}_i$  is 
\begin{small}
\begin{eqnarray*}& &\left[\left(n+n \mathfrak{c}(\sig(0:i)) + n \mathfrak{i}(\sig(0:i))\right) + \left(p+p \mathfrak{c}(\sig(0:i)) + m \mathfrak{i}(\sig(0:i))\right)\right] \times \\
&& \left[\left(n + n \mathfrak{c}(\sig(0:i)) + n \mathfrak{i}(\sig(0:i))\right) + \left(m + p \mathfrak{c}(\sig(0:i)) + m \mathfrak{i}(\sig(0:i))\right)\right] 
\end{eqnarray*}
\end{small}
and for $i = d_{D}-1: d_{A}-2$  the size of $\mathbb{W}_i$ is 
\begin{small}
\begin{eqnarray*}
&&\left[\left(n+n \mathfrak{c}(\sig(0:i)) + n \mathfrak{i}(\sig(0:i))\right) + \left(p+p \mathfrak{c}(\sig(0:d_D-2)) + m \mathfrak{i}(\sig(0:d_D-2))\right)\right]  \times \\
&&
\left[\left(n+n \mathfrak{c}(\sig(0:i)) + n \mathfrak{i}(\sig(0:i))\right) + \left(m + p \mathfrak{c}(\sig(0:d_D-2)) + m \mathfrak{i}(\sig(0:d_D-2))\right)\right].
\end{eqnarray*}
\end{small}
\item[(b)] The $(1,1)$ block of $\mathbb{W}_{i}$ is $-A_{i+1}\in\mathbb{C}^{n\times n}$ and for $i=0:d_{D}-2$ the $(3+i,3+i)$ block of $\mathbb{W}_{i}$ is $-D_{i+1}\in\mathbb{C}^{p\times m}$ and for $i=d_{D}-1:d_{A}-2$, the $(3+i,3+i)$ block of $\mathbb{W}_{i}$ is $-D_{d_{D}-1}\in\mathbb{C}^{p\times m}$. The remaining diagonal blocks of $\mathbb{W}_{i}$ are square zero matrices, and more precisely, $\mathbb{W}_i(i+2-j, i+2-j) = 0_n$ for $j = 0, 1, \ldots, i$ and $i= 0, 1, \ldots d_{{A}}-2$,
 and 
\[\mathbb{W}_i(2i+4-j, 2i+4-j) =
\begin{cases}
0_p, & \text{if } \sig \text{ has a consecution at } j \\
0_m, &  \text{if } \sig \text{ has an inversion at } j
\end{cases}
\]
 for $j = 0, 1, \ldots, i$ and $i= 0, 1, \ldots d_{{D}}-2$ and
    $$\mathbb{W}_{i}(d_{D}+2-j,d_{D}+2-j)=\begin{cases}
                                     0_p, \,\,\,\,\ &\text{if $\sigma$ has a conseqution at $j$} \\
                                     0_m, &\text{if $\sigma$ has an inversion at $j$}
                                     \end{cases}$$
    for $j=0:d_{D}-2$ and $i=d_{D}-1:d_{A}-2$.

\item[(c)] If $\sig$ has a consecution at $i$, then the size of the zero block in the $W_{21}$ block of $\mathbb{W}_i$ is $0_{p \times n}$ and if $\sig$ has an inversion at $i$, then the size of the zero block in the $W_{12}$ block of $\mathbb{W}_i$  is $0_{n \times m}$.
\end{itemize}
\end{lemma}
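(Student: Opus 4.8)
The plan is to prove all three parts of Lemma~\ref{cowi} by induction on $i$, tracking carefully how Algorithm~\ref{alg1} builds $\mathbb{W}_i$ from $\mathbb{W}_{i-1}$. The base case is the inspection of $\mathbb{W}_0$ in the two branches of the first \texttt{if}-statement: in the consecution branch the matrix is $4$-block with row/column sizes $(n,n \mid p,p)$ and $(n,n\mid p,m)$ respectively, matching the formula in (a) with $\mathfrak{c}(\sig(0:0))=1$, $\mathfrak{i}(\sig(0:0))=0$; in the inversion branch it is $(n,n\mid p,m)$ by $(n,n\mid p,m)$, matching $\mathfrak{c}=0$, $\mathfrak{i}=1$. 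Simultaneously one reads off the $(1,1)$ block $-A_1$, the $(3,3)$ block $-D_1$, and that the remaining diagonal blocks are $0_n$ in positions $(2,2)$ and, in position $(4,4)$, $0_p$ under a consecution and $0_m$ under an inversion — this settles (b) and (c) for $i=0$.

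For the inductive step I would split into the range $1 \le i \le d_D-2$, where both an $A$-block and a $D$-block are inserted, and the range $d_D-1 \le i \le d_A-2$, where only an $A$-block is inserted (the $D$-side has been exhausted). In the first range, inspecting the two branches shows that passing from $\mathbb{W}_{i-1}$ to $\mathbb{W}_i$ always adds exactly $n$ rows and $n$ columns to the $A$-part (the new $-A_{i+1}$ and a new $I_n$), so the $A$-dimensions grow by $n$ regardless of consecution/inversion — this reproduces the term $n + n\mathfrak{c}(\sig(0:i)) + n\mathfrak{i}(\sig(0:i))$ since exactly one of $\mathfrak{c},\mathfrak{i}$ increments at step $i$. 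On the $D$-side, a consecution at $i$ adds a $0_p$ row-block and an $I_p$ column-block together with the new $-D_{i+1}$ (net: $+p$ rows, $+p$ columns beyond what $-D_{i+1}$ itself contributes as a $p\times m$ block, which is already accounted), while an inversion at $i$ adds an $I_m$ row-block and a $0_m$ column-block; bookkeeping of which of the four summands $p\mathfrak{c}$, $m\mathfrak{i}$ (rows) and $p\mathfrak{c}$, $m\mathfrak{i}$ (columns) picks up the increment then yields exactly the claimed sizes. The positions of the freshly inserted blocks — $-A_{i+1}$ landing in the $(1,1)$ slot and $-D_{i+1}$ in the $(3+i,3+i)$ slot, with the old diagonal blocks of $\mathbb{W}_{i-1}$ shifting down by one index on each side — give the recursive description in (b): the zero diagonal blocks $\mathbb{W}_i(i+2-j,i+2-j)=0_n$ and $\mathbb{W}_i(2i+4-j,2i+4-j)\in\{0_p,0_m\}$ are obtained by combining the inductive hypothesis (those were the blocks at shifted positions in $\mathbb{W}_{i-1}$) with the two newly created zero diagonal blocks at $j=i$. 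Part (c) for step $i$ is then immediate from which branch was taken: the consecution branch explicitly writes a $0$ block sitting in the $(3,1)$-type (i.e.\ $W_{21}$) position, whose row count is $p$ (it aligns with the new $-D_{i+1}$ row) and column count is $n$ (it aligns with the new $-A_{i+1}$ column); dually for inversion.

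For the second range $d_D-1 \le i \le d_A-2$ the argument is the same but simpler: only the $A$-side is enlarged (again by $n$ rows and $n$ columns per step), the $D$-side is frozen at the value it had at $i=d_D-2$, which is precisely why the size formula in (a) replaces $\mathfrak{c}(\sig(0:i))$ by $\mathfrak{c}(\sig(0:d_D-2))$ and $\mathfrak{i}(\sig(0:i))$ by $\mathfrak{i}(\sig(0:d_D-2))$ in the $p$- and $m$-summands only. Here the $(3+i,3+i)$ block is claimed to be the frozen $-D_{d_D-1}$; one checks that in the transition from $\mathbb{W}_{i-1}$ to $\mathbb{W}_i$ this block is merely re-indexed (it moves from position $3+(i-1)$ to $3+i$ under the downward shift caused by prepending the new $A$-row/column), so induction carries it through unchanged. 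The main obstacle I anticipate is purely notational rather than conceptual: keeping the block-index arithmetic consistent between the MATLAB-style slicing in Algorithm~\ref{alg1} (ranges like \texttt{(1:i+1)}, \texttt{(2+i:2i+2)}, \texttt{(i+2:d\_D+i+1)}) and the abstract block-row/block-column labels used in the statement of (b) and (c); one must verify that the submatrix extractions $\mathbb{W}_{i-1}(1:i+1,\cdot)$, $\mathbb{W}_{i-1}(2+i:2i+2,\cdot)$ etc.\ exactly partition the previous matrix into its $A$-part and $D$-part with no overlap or omission, which relies on the inductive size formula being correct — so (a) and (b) genuinely have to be proved together in a single induction rather than in sequence.
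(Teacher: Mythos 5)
Your approach is essentially the paper's: both proofs proceed by induction on $i$, verifying $\mathbb{W}_0$ by inspection and then tracking how Algorithm~\ref{alg1} enlarges the $A$-side and the $D$-side at each step, handling the ranges $0\le i\le d_D-2$ and $d_D-1\le i\le d_A-2$ separately, and reading off (b) and (c) from the positions of the freshly inserted blocks. One slip in your base case: in the inversion branch the column block partition of $\mathbb{W}_0$ is $(n,n\mid m,m)$, not $(n,n\mid p,m)$ — block column $3$ is carried by $-D_1\in\C^{p,m}$ and block column $4$ by $B\in\C^{n,m}$, both $m$ columns wide — and this is exactly what the formula $(n+n\mathfrak{i})+(m+m\mathfrak{i})=2n+2m$ with $\mathfrak{c}=0,\mathfrak{i}=1$ requires; your later bookkeeping (consecution adds $n+p$ rows and $n+p$ columns, inversion adds $n+m$ rows and $n+m$ columns in the first range; only the $A$-part grows in the second) is correct.
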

\begin{proof}
The proof follows that of  [\cite{DeDM12a}, Th. 3.6]. Note that $\mathbb{W}_0$ is well-defined. Therefore $\mathbb{W}_1$ is also well-defined for either $\sig$ having a consecution at $1$ or an inversion at $1$. This can be seen as follows. Suppose that $\sig$ has a consecution at $1$, then
\[
\mathbb{W}_1 = \left[
          \begin{array}{cccccc}
            -A_2 & I_n & 0 & 0  & 0  & 0 \\
            \mathbb{W}_0(1:2, 1) & 0 & \mathbb{W}_{0}(1:2, 2) & \mathbb{W}_0(1:2, 3)& 0 & \mathbb{W}_0(1:2, 4) \\
             0 & 0 & 0 & -D_2  & I_p  & 0 \\
            \mathbb{W}_0(3:4, 1) & 0 & \mathbb{W}_{0}(3:4, 2) & \mathbb{W}_0(3:4, 3)& 0 & \mathbb{W}_0(3:3, 4) \\
          \end{array}
        \right]
\]
and if $\sig$ has an inversion at $1$ then
\[
\mathbb{W}_1 = \left[
    \begin{array}{cccc}
      -A_2 & \mathbb{W}_0(1, 1:2) & 0 & \mathbb{W}_0(1, 3:4) \\
      I_n & 0 & 0 & 0 \\
      0 & \mathbb{W}_0(2, 1:2)& 0 & \mathbb{W}_0(2, 3:4) \\
      0 & \mathbb{W}_0(3, 1:2) & -D_2 &  \mathbb{W}_0(3, 3:4)  \\
      0 & 0 & I_m & 0 \\
      0 & \mathbb{W}_0(4, 1:2) & 0 & \mathbb{W}_0(4, 3:4) \\
    \end{array}
  \right].
\]
In both cases $\mathbb{W}_1(1,1) = -A_2$ and $\mathbb{W}_1(4,4) = -D_2$.  Also, $\mathbb{W}_0(:, 1)$ has $n$ columns and $\mathbb{W}_0(1, :)$ has $n$ rows and $\mathbb{W}_0(:, 3)$ has $m$ columns and $\mathbb{W}_0(3, :)$ has $p$ rows. Hence in both cases $\mathbb{W}_1$ is well-defined. The same argument can be applied inductively to show that $\mathbb{W}_2, \ldots, \mathbb{W}_{d_{A}-2}$ are  well-defined. Note that, by definition of $\mathbb{W}_0, $ it follows that $\mathbb{W}_i$ is partitioned into $(2i+4) \times (2i+4)$ blocks and for the remaining matrices up to index $d_D$ in the sequence in each step of the "for" loop of Algorithm~\ref{alg1}, two block rows and two block columns are added  and,  for the indices $d_{D+1}$ to $d_A$, one block row and one block column is added in each step.

\begin{itemize}
\item[(a)] Note that the size of $\mathbb{W}_0$ is $(2n+2p)\times (2n+m+p)$ if $\sig$ has a consecution at $0$ and $(2n+p+m)\times (2n+2m)$ if $\sig$ has an inversion at $0$. Hence part $(a)$ holds for $\mathbb{W}_0$ and for the remaining matrices the result is as follows: Up to index $d_{D}-2$ we have the following:
 
If $\sig$ has a consecution at $i$ then $\mathbb{W}_i$ has $n+p$ rows and $n+m$ more columns  than $\mathbb{W}_{i-1}$, and if $\sig$ has an inversion at $i$ then $\mathbb{W}_i$ has $n+p$ rows and $n+m$ more columns  than $\mathbb{W}_{i-1}$.

For indices $d_{D}-1: d_{A}-2$ we have the following:

If $\sig$ has a consecution at $i$ then $\mathbb{W}_i$ has $n$ rows and $n$  more columns than $\mathbb{W}_{i-1}$ and if $\sig$ has an inversion at $i$ then $\mathbb{W}_i$ has $n$ rows and $n$  more columns than $\mathbb{W}_{i-1}$.  Hence $(a)$ is proved.

\item[(b)]
From the construction of $\mathbb{W}_{i}$ it is clear that for $i=0:d_{A}-2$, the $(1,1)$ diagonal block of $\mathbb{W}_{i}$ is $-A_{i+1}$ and for $i=0:d_{D}-2$, the $(3+i,3+i)$ block of $\mathbb{W}_{i}$ is is $-D_{i+1}$ and for $i=d_{D}-1:d_{A}-1$, the $(3+i,3+i)$ block of $\mathbb{W}_{i}$ is $-D_{d_{D}-1}$. The remaining diagonal blocks of  $\mathbb{W}_i$  are square zero matrices $0_n$ for blocks $2,\ldots,(3+i)-1$, while for blocks from $3+i+1$ on, the diagonal block is either  $0_p$ or $0_m$ depending upon the consecutions and inversions at  $0, 1, \ldots, i$. This finishes the proof of $(b)$.

\item[(c)] The proof directly follows from the construction of $\mathbb{W}_i$. 
\end{itemize}
\end{proof}
A similar result for the case  $d_A< d_D$ is presented in the Appendix.

\begin{remark}\label{rem:equaldegree}{\rm 
Let  $S(\lam)$ be an $(n+p) \times (n+m)$ RSMP as in (\ref{rosmatrix}) with $A(\lam) = \sum\limits_{i=0}^{d_A}\lam^{i}A_i\in \C[\lam]^{n, n}, D(\lam) = \sum \limits_{i=0}^{d_D}\lam^{i}D_{i} \in \C[\lam]^{p, m}$ with $d_A= d_D$. For this case Algorithm~\ref{alg1} holds and will stop at $i= 0:d_{D}-2$ and the same Lemma~\ref{cowi} also holds up to $i= 0:d_{D}-2$. } 
\end{remark}

\begin{remark}\label{rem:remdiff}{\rm 
Consider Algorithm~\ref{alg1} in \cite{BehB22} for square RSMPs. Then Algorithm~\ref{alg1} differs only in the sizes of identity block matrices and zero block matrices added at each step of the construction which are chosen to fit the size $p \times m$ of the coefficients of the matrix polynomial $D(\lam)$ and to the matrices $ C$ and $B$. }
\end{remark}

Using the construction in Algorithm~\ref{alg1}, we have the following definition of Fiedler pencils for  RSMPs $S(\lambda) \in{\mathbb C}[\lambda]^{n+p,n+m}$.
\begin{definition}\label{def:FPRSMP} 
Let $S(\lam)$ be an $(n+p) \times (n+m)$ RSMP as in (\ref{rosmatrix}) with $D(\lam) = \sum \limits_{i=0}^{d_D}\lam^{i}D_{i} \in \C[\lam]^{p, m}$ and regular $A(\lam) = \sum\limits_{i=0}^{d_A}\lam^{i}A_i\in \C[\lam]^{n, n}$, where $d_A \geq d_D$. Let $\sig : \{0, 1, \ldots, d_{A}-1\} \rightarrow \{1, 2, \ldots, d_{A}\}$ be a bijection and denote by $\mathbb{M}_{\sig}$ the last matrix of the sequence constructed by Algorithm~\ref{alg1}, that is $\mathbb{M}_{\sig} = \mathbb{W}_{d_{A}-2}$. Then the pencil
\[
\mathbb{L}_{\sig}(\lam) = \lam \left[
                                   \begin{array}{cc|cc}
                                     A_{d_{A}} &  & & \\
                                      & I_{(d_{A}-1)n} & 0 &  \\
                                     \hline
                                      &  &  D_{d_{D}} & \\
                                      & 0 &  & I_{p \mathfrak{c}(\sig)+ m \mathfrak{i}(\sig)} \\
                                   \end{array}
                                 \right] - \mathbb{M}_{\sig}
\]
of size $(p + p \mathfrak{c}(\sig) + m \mathfrak{i}(\sig)) +d_{A}n \times (m + p \mathfrak{c}(\sig) + m \mathfrak{i}(\sig)) +d_{A}n$ is called \emph{Fiedler pencil} of $S(\lam)$ associated with $\sig$.
\end{definition}
\begin{remark}{\rm}
\begin{itemize}
\item[(1)] When $p  \neq  m $, the size of the zero matrices in the blocks $(1,2)$  and $(2, 1)$ in the leading coefficient of the Fiedler pencil $\mathbb{L}_{\sig}(\lam)$  defined in  
Definition~\ref{def:FPRSMP} are different.
Furthermore, there are Fiedler pencils associated with  $S(\lam)$ with several different sizes, because for $d = \max(d_{A}, d_{D})$ the sum $\mathfrak{c}(\sig) + \mathfrak{i}(\sig) = d-1$ is fixed for all $\sig$ and so different pairs $(\mathfrak{c}(\sig), \mathfrak{i}(\sig))$ produce different sizes of $\mathbb{L}_{\sig}(\lam). $
\item[(2)] In the definition of Fiedler pencils for rectangular  $S(\lam)$,  the bijection is not really needed, rather we 
need the sequence of decisions that we have identified with $\sig$ having a consecution or inversion.
\end{itemize}
\end{remark}

\begin{example}
Suppose that  $\sig = (1, 2, 4, 3, 6, 5) $ and $D(\lam) = \lam D_1+D_0$. Then $\mathbb{M}_{\sig} = \mathbb{M}_0 \mathbb{M}_1 \mathbb{M}_3 \mathbb{M}_2 \mathbb{M}_5 \mathbb{M}_4=\mathbb{M}_0 \mathbb{M}_1 \mathbb{M}_3 \mathbb{M}_5 \mathbb{M}_2 \mathbb{M}_4 $, since $\mathbb M_2$ and $\mathbb M_5$ commute. If $S(\lam)$ is rectangular and $d_{A} = 6$, then 
\[
\mathbb{M}_{\sig}=\mathbb{W}_4 = \left[
              \begin{array}{cccccc|c}
                -A_5 & -A_4 & I_m & 0 & 0 & 0 & 0 \\
                I_n & 0 & 0 & 0 & 0 & 0 & 0 \\
                0 & -A_3 & 0 & -A_2 & I_m & 0 & 0 \\
                0 & I_n & 0 & 0 & 0 & 0 & 0 \\
                0 & 0 & 0 & -A_1 & 0 & I_m & 0 \\
                0 & 0 & 0 & -A_0 & 0 & 0 & B \\
                \hline
                0 & 0 & 0 & -C & 0 & 0 & -D_0 \\
              \end{array}
            \right].
\]
If we consider a square  $S(\lam)$ with the same $\sig$, then a direct multiplication of Fiedler matrices gives
\[
\mathbb{W}_4 = \left[
              \begin{array}{cccccc|c}
                -A_5 & -A_4 & I_n & 0 & 0 & 0 & 0 \\
                I_n & 0 & 0 & 0 & 0 & 0 & 0 \\
                0 & -A_3 & 0 & -A_2 & I_n & 0 & 0 \\
                0 & I_n & 0 & 0 & 0 & 0 & 0 \\
                0 & 0 & 0 & -A_1 & 0 & I_n & 0 \\
                0 & 0 & 0 & -A_0 & 0 & 0 & B \\
                \hline
                0 & 0 & 0 & -C & 0 & 0 & -D_0 \\
              \end{array}
            \right].
\]
Observe that the matrices $\mathbb W_4$ look the same in both cases but the sizes of zero blocks in the last block row and  columns are not the same. Also, the sizes of some identity blocks are different as per the size of the matrix $A_i$.  
\end{example}
In the next section we analyze the properties of the constructed Fiedler pencils.

\section{Fiedler pencils of rectangular  $S(\lam)$ are linearizations} \label{sec:LFP}
In this section we show that the Fiedler pencils that we have constructed for RSMPs are indeed linearizations. For this we have to recall a few basic results.
\begin{definition}\label{def:syseq} Let $ S_1(\lam)$ and $S_2(\lam)$ be $(n+p) \times (n+m)$ RSMPs of the form (\ref{rosmatrix}) that are partitioned conformably.  Then $ S_1(\lambda)$ is said to be \emph{system equivalent} to $S_2(\lambda)$ (denoted as $ S_1(\lam) \thicksim_{se}  S_2(\lam)$), if there exist unimodular matrix polynomials $ U(\lambda), V(\lambda)\in \mathbb F^{n,n}$, $ \widetilde{U}(\lam)\in \mathbb F^{p,p}$, and $\widetilde{V}(\lam)\in \mathbb F^{m,m}$ such that  for all   $\lam \in \C$ we have
\begin{equation}\label{sysequi}
 \left[\begin{array}{c|c} U(\lam) &  0 \\  \hline 0 & \widetilde{U}(\lam) \end{array} \right]  S_1(\lam)  \left[ \begin{array}{c|c} V(\lam) & 0 \\ \hline  0&  \widetilde{V}(\lam) \end{array}\right] = S_2 (\lam).
\end{equation}
\end{definition}
\begin{definition} \label{def:roslin} Let $ S(\lam)$ be an $ (n+p) \times (n+m)$ RSMP of the form (\ref{rosmatrix})
with degree $d=\max\{d_A,d_D\}$. A linear matrix polynomial $\mathbb{L}(\lambda)$ is called a \emph{Rosenbrock linearization} of $S(\lambda)$
if it has the form
\[
{\mathbb L}(\lambda):=
\left[
                                                     \begin{array}{c|c}
                                                       {\mathcal A}(\lam) & {\mathcal B}\\
                                                       \hline
                                                       {\mathcal C} & {\mathcal D}(\lam) \\
                                                     \end{array}
                                                   \right],
\]
with matrix polynomials ${\mathcal A}(\lambda),{\mathcal D}(\lambda)$ of degree less than or equal to $1$, constant 
matrices ${\mathcal B}, {\mathcal C}$,  and ${\mathbb L}(\lambda)$ is system equivalent to 
\begin{equation}\label{sysL}
\mathcal{\Tilde{S}}(\lambda) := \left[\begin{array}{c|c} U(\lam) &  0 \\  \hline 0 & \widetilde{U}(\lam) \end{array} \right] \mathbb{L}(\lam)  \left[ \begin{array}{c|c} V(\lam) & 0 \\ \hline  0&  \widetilde{V}(\lam) \end{array}\right] =\left[
                                           \begin{array}{c|c}
                                             I_{(d-1)n} & 0 \\
                                             \hline
                                             0 &   S(\lam) \\
                                           \end{array}
                                         \right],
\end{equation}
where  $U(\lam)$, $V(\lam)$, $\tilde{U}(\lam)$, and $\tilde{V}(\lambda)$ are unimodular matrix polynomials.
If, in addition, $U(\lam)$, $V(\lam)$, $\tilde{U}(\lam)$, and $\tilde{V}(\lam)$ in  (\ref{sysL}) are constant matrices, then $\mathbb{L}(\lam)$ is said to be a \emph{strict Rosenbrock linearization} of  $S(\lam)$.
\end{definition}
For  RSMPs  the first and second companion forms $\mathcal C_1(\lam)$ and $\mathcal C_2(\lam)$ are Rosenbrock linearizations but their sizes are different. Also, note that in Definition~\ref{def:roslin} we are allowed to have any output dimension $p \geq 0$, whereas if  $S(\lam)$ is square of size $(n +r) \times (n +r)$ then the size of $p$ in $\mathbb{L}(\lam)$ is fixed and of size $(d_{A}-1)n$, 
see \cite{AlaB16}.

Another concept that needs to be extended to rectangular Rosenbrock matrix polynomials is that of 
Horner shifts, see \cite{DeTDM09}.
Let $P(\lam) = A_0 + \lam A_1 + \cdots + \lambda^{d_A}A_{d_A} \in {\mathbb F}[\lambda]^{p,m}$. 
For $k = 0, \ldots, d_A$, the degree $k$ \emph{Horner shift} of $P(\lambda)$ is the matrix polynomial $ P_{k}(\lambda) := A_{d_A-k} + \lambda A_{d_A-k+1} + \cdots + \lambda^{k}A_{d_A}$ and these satisfy
\begin{align}
& P_{0}(\lambda) = A_{d_A}, \nonumber \\
&  P_{k+1}(\lambda) = \lambda  P_{k}(\lambda) + A_{d_A-k-1} , \mbox{  for } 0\leq k\leq d_A-1, \nonumber\\
&  P_{d_A}(\lambda) =  P(\lambda).  \label{hs}
\end{align}

Assuming again that $d_{A} > d_{D}$, we have the following two algorithms (in MATLAB notation) which will be used to show that Fiedler pencils are linearizations of general RSMPs.

\begin{algorithm}[H]
\caption{The following algorithm constructs a sequence of matrix polynomials $\{\mathbb{N}_0, \mathbb{N}_1, \ldots, \mathbb{N}_{d_{A}-2}\}, $ where each matrix $\mathbb{N}_i$, for $i = 1,2, \ldots, d_{A}-2, $ is partitioned into blocks in such a way that the blocks of $\mathbb{N}_{i-1}$ are blocks of $\mathbb{N}_i$. Let $P_{k}(\lam)$ and $Q_{k}(\lambda)$ denote the degree $k$ Horner shifts of $A(\lam) $ and $D(\lam)$, respectively.   For simplicity we have omitted the argument $\lam$ in $P_{d_{A}}(\lam)$, $Q_{d_{D}}(\lam)$ and in  $\{\mathbb{N}_i\}_{i=0}^{d_{A}-2}$.}
\label{alg3}

\begin{algorithmic}
\If {$\sig$ has a consecution at $0$, }
\State $\mathbb{N}_0 = \left[
          \begin{array}{cc|cc}
            I_n & 0 & & \\
            \lam I_n & I_n & & \\
            \hline
             &  & I_p & 0 \\
            &  & \lam I_p & I_p   \\
          \end{array}
        \right]
$
\Else
\State $\mathbb{N}_0 = \left[
          \begin{array}{cc|cc}
            0 & -I_n & & \\
           I_n & P_{d_{A}-1} & & \\
            \hline
             &  & 0 & -I_m  \\
            &  & I_p & Q_{d_{D}-1}   \\
          \end{array}
        \right]
$
\EndIf

\For {$i=1:d_{D}-2$}
\If{$\sigma$ has a consecution at $i$}
    \State $\mathbb{N}_i=\left[
                    \begin{array}{c|c}
                    	N_i & \\ \hline
                    	& N_i'
                    \end{array}
                     \right]$, \\
where \\

 $N_i = \left[ \begin{array}{cc} I_n & 0 \\
 	 \lambda \mathbb{N}_{i-1}(1:i+1,1) & \mathbb{N}_{i-1}(1:i+1,1:i+1) \end{array} \right]$, \\
   $N_i' = \left[ \begin{array}{cc} I_p & 0 \\          
   \lambda\mathbb{N}_{i-1}(2+i:2i+2,i+2) & \mathbb{N}_{i-1}(2+i:2i+2,2+i:2i+2) \end{array} \right]$ \\

\Else
\State $\mathbb{N}_i = \left[
                \begin{array}{c|c}
                	N_i & \\ \hline & N_i'
               \end{array}
           	 \right]$,  \\ where
           	 
{\scriptsize $N_i = \left[ \begin{array}{ccc} 0 & -I_n & 0 \\ \mathbb{N}_{i-1}(1:i+1,1) & \mathbb{N}_{i-1}(1:i+1,1)P_{d_A-i-1} & \mathbb{N}_{i-1}(1:i+1,2:i+1) \end{array} \right] $, \\
$N_i' = \left[ \begin{array}{ccc}  0 & -I_m & 0 \\  \mathbb{N}_{i-1}(2+i:2i+2,i+2) & \mathbb{N}_{i-1}(2+i:2i+2,i+2)Q_{d_D-i-1} & \mathbb{N}_{i-1}(2+i:2i+2,3+i:2i+2) \end{array} \right]$}

\EndIf

\EndFor
           	 
\For {$i = d_{D}-1:d_{A}-2$}
\If{$\sigma$ has a consecution at $i$}
    \State $\mathbb{N}_i = \left[
                    \begin{array}{cc}
                      I_n & 0 \\
                      \lam \mathbb{N}_{i-1}(:, 1) & \mathbb{N}_{i-1} \\
                    \end{array}
                  \right]
    $
\Else
\State $\mathbb{N}_i = \left[
         \begin{array}{ccc}
           0 & -I_n & 0 \\
           \mathbb{N}_{i-1}(:, 1) & \mathbb{N}_{i-1}(:, 1)P_{d_{A}-i-1} & \mathbb{N}_{i-1}(:, 2:d_{D}+i+1)  \\
         \end{array}
       \right]
$
\EndIf

\EndFor

\end{algorithmic}
\end{algorithm}

\begin{algorithm}[H]
\caption{The following algorithm constructs a sequence of matrix polynomials $\{\mathbb{H}_0, \mathbb{H}_1, \ldots, \mathbb{H}_{d_{A}-2}\}, $ where each matrix $\mathbb{H}_i$, for $i = 1,2, \ldots, d_{A}-2, $ is partitioned into blocks in such a way that the blocks of $\mathbb{H}_{i-1}$ are blocks of $\mathbb{H}_i$. Let $P_{k}(\lam)$ and $Q_{k}(\lambda)$ denote the degree $k$ Horner shifts of $A(\lam) $ and $D(\lam)$, respectively.}
	\label{alg4}
\begin{algorithmic}

\If{$\sigma$ has a consecution at $0$}
    \State $\mathbb{H}_0 := \left[
                    \begin{array}{cc|cc}
                      0 & I_n & & \\
                      -I_n & P_{d_{A}-1} & & \\
                      \hline
                        &  & 0 & I_m \\
                     & & -I_p & Q_{d_{D}-1}  \\
                    \end{array}
                  \right]
$

\Else
    \State $\mathbb{H}_0 := \left[
                    \begin{array}{cc|cc}
                      I_{n} & \lam I_n & & \\
                       0 & I_n &  & \\
                      \hline
                     & & I_{m} & \lam I_m   \\
                      & & 0 & I_m \\
                    \end{array}
                  \right]
    $
\EndIf

\For{$i=1:d_{D}-2$}
\If{$\sigma$ has a consecution at $i$}
    \State  $ \mathbb{H}_i := \left[
             \begin{array}{c|c}  
             H_{i} & \\
             \hline
             & H_{i}'
             \end{array}
         \right], 
$ where \\  $H_i := \left[
             \begin{array}{cc}
             	0 & \mathbb{H}_{i-1}(1,1:i+1)   \\
             	-I_n & P_{d_A-i-1}\mathbb{H}_{i-1}(1,1:i+1)   \\ 
             	0 & \mathbb{H}_{i-1}(2:i+1,1:i+1) 
             	 \end{array}
         \right]$ \\
         \vspace{0.3cm}
        $H_i':= \left[
             	\begin{array}{cc}
             	   0 & \mathbb{H}_{i-1}(i+2,2+i:2i+2) \\ 
             	   -I_p & Q_{d_D-i-1}\mathbb{H}_{i-1}(i+2,2+i:2i+2) \\
             	   0 & \mathbb{H}_{i-1}(3+i:2i+2,2+i:2i+2)
             \end{array}
         \right]$ \\

\Else
    \State $\mathbb{H}_i= \left[
             \begin{array}{c|c}  
             H_{i} & \\
             \hline
             & H_{i}'
             \end{array}
         \right],
$  where \\
    $H_i := \left[
              \begin{array}{cc}
              	I_n & \lambda\mathbb{H}_{i-1}(1,1:i+1)   \\ 
              	0 & \mathbb{H}_{i-1}(1:i+1,1:i+1) 
              	\end{array}	 
              	\right]$ \\ \vspace{0.3cm}
              $H_{i}'=\left[
              \begin{array}{cc}
              	 I_m & \lambda \mathbb{H}_{i-1}(i+2,i+2:2i+2)\\
              	 0 & \mathbb{H}_{i-1}(i+2:2i+2,i+2:2i+2)
              \end{array}	 
              	\right]
$  \\
 
\EndIf
\EndFor

   \For{$i = d_{D}-1:d_{A}-2$}

\If{$\sigma$ has a consecution at $i$}
    \State $\mathbb{H}_i := \left[
           \begin{array}{cc}
             0 & \mathbb{H}_{i-1}(1,:)   \\
             -I_{n} & P_{d_{A}-i-1} \mathbb{H}_{i-1}(1, :)    \\
             0 & \mathbb{H}_{i-1}(2:d_{D}+i+1, :)  \\
           \end{array}
         \right]$

\Else
    \State $\mathbb{H}_i :=  \left[
       \begin{array}{cc}
         I_n & \lam \mathbb{H}_{i-1}(1, :)  \\
          0 & \mathbb{H}_{i-1}   \\
       \end{array}
     \right]
   $

\EndIf

\EndFor

\end{algorithmic}
\end{algorithm}

Based on the construction in  Algorithms~\ref{alg3} and  \ref{alg4} we have the following properties.
\begin{lemma}\label{cohiani}
Let $S(\lam)$ be as in (\ref{rosmatrix}) with $A(\lam) = \sum_{i=0}^{d_{A}}\lam^{i}A_i\in \mathbb C[\lam]^{n, n}$ and $D(\lam) = \sum_{j=0}^{d_{D}}\lam^{i}D_i\in \mathbb C[\lam]^{p, m}$ with $d_{A} > d_{D}$. Let $\sig: \{0, 1, \ldots, d_{A}-1\} \rightarrow \{1, 2, \ldots, d_{A}\}$ be a bijection. Let $\{\mathbb{N}_0, \mathbb{N}_1, \ldots, \mathbb{N}_{d_{A}-2}\} $ and $\{\mathbb{H}_0, \mathbb{H}_1, \ldots, \mathbb{H}_{d_{A}-2}\}$ be as in Algorithms~\ref{alg3} and  \ref{alg4}, respectively. Consider the sequence $\{\mathbb{W}_i\}_{i=0}^{d_{A}-2}$  of block partitioned matrices constructed in Algorithm~\ref{alg1}. Then we have the following.
\begin{itemize}
\item[(a)] For $0\leq i \leq d_{A}-2$, and $1 \leq j\leq i+2, $ the number of columns of $\mathbb{N}_i(:, j)$ is equal to the number of rows of $\mathbb{W}_i(j, :) $ so that the product $\mathbb{N}_i(:, j)\mathbb{W}_i(j, :)$ is well defined.

\item[(b)] For $0\leq i \leq d_{A}-2$, and $1 \leq j\leq i+2, $ the number of columns of $\mathbb{W}_i(:, j)$ is equal to the number of rows of $\mathbb{H}_i(j, :)$  so that the product $\mathbb{W}_i(:, j)\mathbb{H}_i(j, :)$ is well defined.

\item[(c)] For $i = 0:d_{D}-2$, the size of $\mathbb{N}_i$ is $\left[\left(n + n \mathfrak{c}(\sig(0:i)) + n \mathfrak{i}(\sig(0:i)) \right) +\left(p + p \mathfrak{c}(\sig(0:i)) + m \mathfrak{i}(\sig(0:i))\right) \right] \times \left[\left(n + n \mathfrak{c}(\sig(0:i)) + n \mathfrak{i}(\sig(0:i)) \right)+\left(p + p \mathfrak{c}(\sig(0:i)) + m \mathfrak{i}(\sig(0:i)) \right)\right]$ and  for $i= d_{D}-1: d_{A}-2$ the size of $\mathbb{N}_i$ is
\begin{small}
\begin{eqnarray*}
&&\left[\left(n+n \mathfrak{c}(\sig(0:i)) + n \mathfrak{i}(\sig(0:i))\right) + \left(p+p \mathfrak{c}(\sig(0:d_D-2)) + m \mathfrak{i}(\sig(0:d_D-2))\right)\right]  \times \\
&&
\left[\left(n+n \mathfrak{c}(\sig(0:i)) + n \mathfrak{i}(\sig(0:i))\right) + \left(p + p \mathfrak{c}(\sig(0:d_D-2)) + m \mathfrak{i}(\sig(0:d_D-2))\right)\right].
\end{eqnarray*}
\end{small}

\item[(d)] For $i= 0:d_{D}-2$, the size of $\mathbb{H}_i$ is $\left[\left(n + n \mathfrak{c}(\sig(0:i)) + n \mathfrak{i}(\sig(0:i))\right) + \left(m + p \mathfrak{c}(\sig(0:i)) + m \mathfrak{i}(\sig(0:i))\right)\right] \times \left[\left(n + n \mathfrak{c}(\sig(0:i)) + n \mathfrak{i}(\sig(0:i)) \right) + \left(m + p \mathfrak{c}(\sig(0:i)) + m \mathfrak{i}(\sig(0:i)) \right)\right]$ and  for $i= d_{D}-1: d_{A}-2$ the size of $\mathbb{H}_i$ is
\begin{small}
\begin{eqnarray*}
&&\left[\left(n+n \mathfrak{c}(\sig(0:i)) + n \mathfrak{i}(\sig(0:i))\right) + \left(m+p \mathfrak{c}(\sig(0:d_D-2)) + m \mathfrak{i}(\sig(0:d_D-2))\right)\right]  \times \\
&&
\left[\left(n+n \mathfrak{c}(\sig(0:i)) + n \mathfrak{i}(\sig(0:i))\right) + \left(m + p \mathfrak{c}(\sig(0:d_D-2)) + m \mathfrak{i}(\sig(0:d_D-2))\right)\right].
\end{eqnarray*}
\end{small}

 \item[(e)] The matrix polynomials $\mathbb{N}_i$ and $\mathbb{H}_i$ are unimodular with $\det(\mathbb{N}_i) = \pm 1$ and $\det(\mathbb{H}_i) = \pm 1$.
\end{itemize}
\end{lemma}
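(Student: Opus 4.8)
The plan is to prove (a)--(e) simultaneously by induction on $i$, running in lock-step with the recursive constructions of Algorithms~\ref{alg3} and~\ref{alg4} --- which mirror block-for-block the construction of the $\mathbb{W}_i$ in Algorithm~\ref{alg1} --- and using Lemma~\ref{cowi} as a black box for the block structure of the $\mathbb{W}_i$. For the determinant claim in (e) it is convenient to carry a slightly stronger hypothesis along: in the range $i\le d_D-2$ one has $\mathbb{N}_i=\diag(N_i,N_i')$ and $\mathbb{H}_i=\diag(H_i,H_i')$, and we also assert that each of the factors $N_i,N_i',H_i,H_i'$ has determinant $\pm1$; for $i\ge d_D-1$ the matrices are no longer block-diagonal but are obtained from their predecessors by a single bordering step, and a single factor suffices.

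For the base case $i=0$ one reads $\mathbb{N}_0$ and $\mathbb{H}_0$ off the two branches (consecution at $0$, inversion at $0$) and checks directly that: their sizes match the formulas in (c)--(d) with $\mathfrak{c}(\sig(0{:}0))+\mathfrak{i}(\sig(0{:}0))=1$; the first two block columns of $\mathbb{N}_0$ and the first two block rows of $\mathbb{W}_0$ all have size $n$, and dually for $\mathbb{W}_0$ and $\mathbb{H}_0$, which is (a)--(b) at $i=0$; and, after an obvious block permutation in the inversion branch, $\mathbb{N}_0$ and $\mathbb{H}_0$ are block triangular with diagonal blocks drawn from $\{\pm I_n,\pm I_m,\pm I_p,P_{d_A-1},Q_{d_D-1}\}$, hence have determinant $\pm1$.

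For the inductive step I would assume (a)--(e) for $i-1$ and split according to the two ranges $1\le i\le d_D-2$ and $d_D-1\le i\le d_A-2$, and within each according to whether $\sig$ has a consecution or an inversion at $i$. In every case: (c) and (d) follow by counting the rows and columns appended at step $i$ ($n+p$ rows and $n+m$ columns in the first range, $n$ rows and $n$ columns in the second) and using that $\mathfrak{c}(\sig(0{:}i))$ exceeds $\mathfrak{c}(\sig(0{:}i-1))$ by $1$ exactly when $\sig$ has a consecution at $i$, with the analogous statement for $\mathfrak{i}$ --- precisely the computation done in Lemma~\ref{cowi}(a). For (a) and (b) the decisive observation, extracted from Lemma~\ref{cowi}(b), is that the block rows and columns of $\mathbb{W}_{i-1}$ indexed $1,\dots,i+1$ are all of size $n$ (the unique non-square diagonal block appears only one position later), and the construction in Algorithm~\ref{alg1} introduces, among the first $i+2$ block rows and columns of $\mathbb{W}_i$, only further $n$-sized ones; hence the first $i+2$ block rows and columns of $\mathbb{W}_i$ all have size $n$. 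The recursions in Algorithms~\ref{alg3} and~\ref{alg4}, combined with (a)--(b) for $i-1$, then force the first $i+2$ block columns of $\mathbb{N}_i$ and the first $i+2$ block rows of $\mathbb{H}_i$ to be of size $n$ as well, which is exactly the compatibility claimed in (a)--(b). Finally, for (e), in each case the relevant factor ($N_i$, $N_i'$, $H_i$, $H_i'$, or $\mathbb{N}_i$ and $\mathbb{H}_i$ themselves in the second range) arises from the corresponding factor at index $i-1$ by a block row- or column-permutation, a bordering by $\pm I_n$, and the filling of one block row or column in a block-triangular fashion; each such operation multiplies the determinant by $\pm1$, so the strengthened hypothesis propagates and $\det\mathbb{N}_i=\det\mathbb{H}_i=\pm1$.

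I expect the main obstacle to be the bookkeeping behind (a) and (b): one must track, across the switch at $i=d_D-1$ from appending two block rows/columns to appending only one and across the consecution/inversion dichotomy, precisely which block rows and columns of $\mathbb{W}_i$, $\mathbb{N}_i$ and $\mathbb{H}_i$ carry dimension $n$, $m$ or $p$. Lemma~\ref{cowi}(b)--(c) are what make this tractable, since they locate the unique non-square diagonal block $-D_{i+1}$ and describe the sizes of the vanishing diagonal blocks; everything that (a) and (b) need about $\mathbb{W}_i$ follows from there, and the corresponding facts about $\mathbb{N}_i$ and $\mathbb{H}_i$ are then forced by the shape of the recursions.
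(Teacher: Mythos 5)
The paper's own proof is a one-line citation to Lemma~4.2 of \cite{DeDM12a}, which establishes the analogous facts for Fiedler pencils of rectangular matrix polynomials and leaves the adaptation to the Rosenbrock setting to the reader. Your proposal supplies exactly what that citation leaves implicit, and does so correctly: a simultaneous induction on $i$ running in lock-step with the recursive constructions of Algorithms~\ref{alg1}, \ref{alg3}, and~\ref{alg4}, using Lemma~\ref{cowi} to control the block structure of $\mathbb{W}_i$ (in particular, that the first $i+2$ block rows and columns of $\mathbb{W}_i$ are all of size $n$, which is what parts (a)--(b) hinge on), counting the rows and columns appended at each step (which gives (c)--(d) after noting that $\mathfrak{c}(\sig(0{:}i))$, $\mathfrak{i}(\sig(0{:}i))$ increment according to the decision at $i$), and exhibiting each recursive step as a block bordering by $\pm I_n$, $\pm I_m$, or $\pm I_p$ composed with a block permutation, which closes the determinant recurrence for (e). The strengthened induction hypothesis carrying $\det N_i = \pm 1$ and $\det N_i' = \pm 1$ separately while $\mathbb{N}_i = \operatorname{diag}(N_i, N_i')$ (i.e.\ for $i \le d_D - 2$), and then collapsing to a single determinant once the bordering becomes one-sided (for $i \ge d_D - 1$), is the right device to glue the two regimes together; the same remark applies to $\mathbb{H}_i$. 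This is the natural route and agrees in substance with the proof behind the cited reference; where the paper is telegraphic, your write-out supplies the dimension bookkeeping that the Rosenbrock coefficients $B$, $C$, $D(\lam)$ actually require.
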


\begin{proof}
The proof follows directly from Lemma $4.2$ given in \cite{DeDM12a}.
\end{proof}

\begin{remark}\label{rem:equaldegrees2}{\rm 
Let $S(\lam)$ be an $(n+p) \times (n+m)$ RSMP as in (\ref{rosmatrix}) with $A(\lam) = \sum\limits_{i=0}^{d_A}\lam^{i}A_i\in \C[\lam]^{n, n}$ and $ D(\lam) = \sum \limits_{i=0}^{d_D}\lam^{i}D_{i} \in \C[\lam]^{p, m}$ with $d_A= d_D$. For this case Algorithms~\ref{alg3} and \ref{alg4} can be applied as well and will stop at $i= 0:d_{D}-2$ and Lemma~\ref{cohiani} also holds for $i= 0:d_{D}-2$.  }
\end{remark}

Since our algorithms work for $d_{A}\geq d_{D}$, in the following we express our results directly for the case $d_{A}\geq d_{D}$.

\begin{definition}
Let $S(\lam)$ be an $(n+p) \times (n+m)$ RSMP as in (\ref{rosmatrix}) with $A(\lam) = \sum\limits_{i=0}^{d_A}\lam^{i}A_i\in \C[\lam]^{n, n}$ and $ D(\lam) = \sum \limits_{i=0}^{d_D}\lam^{i}D_{i} \in \C[\lam]^{p, m}$ with $d_A\geq d_D$. Let $\sig : \{0, 1, \ldots, d_{A}-1\} \rightarrow \{1, 2, \ldots, d_{A}\}$ be a bijection and let $\mathbb{N}_{d_{A}-2}$ and $\mathbb{H}_{d_{A}-2}$,respectively, be the last matrices of the sequences constructed by Algorithms~\ref{alg3} and \ref{alg4}. 
Then
\begin{itemize}
\item[(a)] $\mathcal{U}_{\sig}(\lam) := \mathbb{N}_{d_{A}-2}$ is the left unimodular equivalence matrix associated with $S(\lam)$ and $\sig. $

\item[(a)] $\mathcal{V}_{\sig}(\lam) := \mathbb{H}_{d_{A}-2}$ is the right unimodular equivalence matrix associated with $S(\lam)$ and $\sig. $
\end{itemize}
\end{definition}

\begin{lemma}\label{rbnihiwi}
Let $S(\lam)$ be an $(n+p) \times (n+m)$ RSMP as in (\ref{rosmatrix}) with $A(\lam) = \sum\limits_{i=0}^{d_A}\lam^{i}A_i\in \C[\lam]^{n, n}$, $D(\lam) = \sum \limits_{i=0}^{d_D}\lam^{i}D_{i} \in \C[\lam]^{p, m}$ and $d_A \geq d_D$. Let $\sig  : \{0, 1, \ldots, d_{A}-1\} \rightarrow \{1, 2, \ldots, d_{A}\}$ be a bijection and let $\{\mathbb{W}_i\}_{i=0}^{d_{A}-2}, \{\mathbb{N}_i\}_{i=0}^{d_{A}-2}$, $\{\mathbb{H}_i\}_{i=0}^{d_{A}-2}$ be the sequences of block matrices constructed in Algorithms~\ref{alg1}, \ref{alg3}, and \ref{alg4}, respectively. Also consider the numbers $\alpha_{i} = p \mathfrak{c}(\sig(0:i)) + m \mathfrak{i}(\sig(0:i))$, $\beta_{i} = p \mathfrak{c}(\sig(i)) + m \mathfrak{i}(\sig(i))$ and $\alpha_{i}' = n \mathfrak{c}(\sig(0:i)) + n \mathfrak{i}(\sig(0:i))$, $\beta_{i}' = n \mathfrak{c}(\sig(i)) + n \mathfrak{i}(\sig(i))$. Note that $\beta_{i} = p$ if $\sig$ has a consecution at $i$ and $\beta_{i} = m$, if $\sig$ has an inversion at $i$, for $i = 0, 1, 2, \ldots, d_{A}-2$. Then we have the following.
\begin{itemize}
\item[(a)] For $1 \leq i \leq d_{A}-2$, 
{\scriptsize $$\mathbb{N}_i \left(\left[
              \begin{array}{cc|cc}
                \lam P_{d_{A}-i-2} &  & & \\
                 & \lam I_{\alpha_i'} & & \\
                \hline
                 &  & \lam Q_{d_{D}-i-2}& \\
                 &  &  & \lam I_{\alpha_i} \\
              \end{array}
            \right]
 -\mathbb{W}_i \right)\mathbb{H}_i 
   = \left[
                      \begin{array}{cc|cc}
                         I_{\beta_i}  &  & 0 & 0  \\
                         & T_1 & 0 & -B \\
                        \hline
                         0 & 0  & I_{\beta_i}  & \\
                        0 &  C  &  &
                         K_1 \\
                      \end{array}
                    \right], $$}
 where $T_1 = N_{i-1} \left( \left[
		\begin{array}{cc}
			\lam P_{d_{A}-i-1} &  \\
			& \lam I_{\alpha_{i-1}} \\
		\end{array}
		\right] -W_{i-1} \right)H_{i-1}$ and $$K_1= N_{i-1}' \left( \left[
		\begin{array}{cc}
			\lam Q_{d_{D}-i-1} &  \\
			& \lam I_{\alpha_{i-1}} \\
		\end{array}
		\right] -W_{i-1}' \right)H_{i-1}'. $$
  
 \item[(b)] For $i = 0$
 {\scriptsize $$\mathbb{N}_0 \left(\left[
              \begin{array}{cc|cc}
                \lam P_{d_{A}-2} &  & &  \\
                 & \lam I_{\alpha_0'} &  &  \\
                \hline
                  &  & \lam Q_{d_{D}-2} &  \\
                 &  &  & \lam I_{\alpha_0}  \\
              \end{array}
            \right]
 -\mathbb{W}_0 \right)\mathbb{H}_0 = \left[
                     \begin{array}{cc|cc}
                       I_{\beta_0'} &  & 0 & 0  \\
                        & A(\lam) & 0 & -B \\
                       \hline
                       0 & 0 & I_{\beta_0} &  \\
                       0 & C & & D(\lam) \\
                     \end{array}
                   \right].
 $$}
\end{itemize}
\end{lemma}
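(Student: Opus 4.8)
The plan is to prove both identities by direct block multiplication, reading off the block structure that Algorithms~\ref{alg1},~\ref{alg3}, and~\ref{alg4} impose on $\mathbb{W}_i$, $\mathbb{N}_i$, and $\mathbb{H}_i$. Part~(b) is the base computation and part~(a) is the recursive step that later gets iterated (together with part~(b)) to diagonalize the full Fiedler pencil; the two parts are proved independently. The computation runs in close parallel to the matrix polynomial case treated in \cite{DeDM12a}: the upper-left $(A,P)$-portion of each product reproduces exactly the Fiedler computation there for the factors of $A(\lam)$, the lower-right $(D,Q)$-portion is a second, independent copy of that computation for $D(\lam)$, and the only genuinely new feature is that the constant blocks $-B$ and $C$ sit in fixed off-diagonal positions that are merely carried along by identity sub-blocks of $\mathbb{N}_i$ and $\mathbb{H}_i$.

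For part~(b) I would substitute the explicit forms of $\mathbb{N}_0$, $\mathbb{W}_0$, $\mathbb{H}_0$, distinguishing the case in which $\sig$ has a consecution at $0$ from the case in which it has an inversion at $0$, and carry out the two $2\times 2$ block products (one in the $A$-part, one in the $D$-part). The Horner recursions $A(\lam)=\lam P_{d_A-1}(\lam)+A_0$ and $D(\lam)=\lam Q_{d_D-1}(\lam)+D_0$, together with $P_{d_A-1}=\lam P_{d_A-2}+A_1$ and $Q_{d_D-1}=\lam Q_{d_D-2}+D_1$ from \eqref{hs}, are exactly what collapse the $A$-part block to $A(\lam)$ and the $D$-part block to $D(\lam)$; the remaining entries reduce to the identity blocks $I_{\beta_0'}$ and $I_{\beta_0}$ (of size $n$, respectively $p$ or $m$ according to consecution or inversion) and to zeros, while the $-B$ and $C$ blocks reach the asserted positions because in every row they are multiplied only by an identity slice.

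For part~(a) the structural input is that, for $1\le i\le d_D-2$, each of $\mathbb{W}_i$, $\mathbb{N}_i$, $\mathbb{H}_i$ is obtained from its $i-1$ predecessor by adjoining two block rows and two block columns (built from $-A_{i+1}$, $-D_{i+1}$, identities, and the Horner shifts $P_{d_A-i-1}$, $Q_{d_D-i-1}$), whereas for $d_D-1\le i\le d_A-2$ only one block row and one block column are adjoined, in the $A$-part. I would expand the triple product on the left-hand side of~(a) blockwise in the four resulting cases (consecution or inversion at $i$, combined with $i\le d_D-2$ or $i\ge d_D-1$). In each case the adjoined rows and columns, together with the outer blocks $\lam P_{d_A-i-2}$ and $\lam Q_{d_D-i-2}$, combine via $P_{d_A-i-1}=\lam P_{d_A-i-2}+A_{i+1}$ and $Q_{d_D-i-1}=\lam Q_{d_D-i-2}+D_{i+1}$ to produce the two new identity blocks $I_{\beta_i}$ and the surrounding zero blocks; meanwhile the entries assembled from the slices $\mathbb{W}_{i-1}(1:i+1,\cdot)$, $\mathbb{N}_{i-1}(:,1:i+1)$, $\mathbb{H}_{i-1}(1:i+1,\cdot)$ reconstitute the product $N_{i-1}(\ldots - W_{i-1})H_{i-1}=T_1$ in the $A$-part, and the corresponding primed slices reconstitute $K_1$ in the $D$-part. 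The blocks $-B$ and $C$, which occupy the last column of the $A$-part and the first column of the $D$-part of $\mathbb{W}_i$, are multiplied only by identity sub-blocks of $\mathbb{N}_i$ and $\mathbb{H}_i$ and so reappear unchanged; here Lemma~\ref{cowi}(c) is invoked to guarantee that the zero blocks adjacent to $B$ and $C$ have conforming dimensions.

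The main obstacle is not mathematical depth but bookkeeping: one must keep the MATLAB-style sub-block slices aligned through the four case distinctions and check that the row and column partitions conform at each step, which is exactly where the size statements of Lemmas~\ref{cowi} and~\ref{cohiani} enter. Once the partitions are matched, the remaining algebra is just a repeated application of the Horner identity~\eqref{hs} together with the elementary fact that constant blocks pass through identity slices, so each of the cases reduces to a short computation, and the overall argument is the proof of \cite[Lemma~4.2]{DeDM12a} carried out twice in parallel and decorated with the $B$, $C$ coupling terms.
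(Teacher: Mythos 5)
Your proposal follows essentially the same route as the paper's own proof: direct block multiplication of $\mathbb{N}_i(\,\cdot\, - \mathbb{W}_i)\mathbb{H}_i$ combined with the Horner recursion \eqref{hs}, with part (b) as the base computation and part (a) as the recursive step, each reducing to two parallel copies of the argument in \cite{DeDM12a} decorated with the $B,C$ coupling. One small caution on detail: describing the $-B$ and $C$ blocks as merely ``multiplied only by identity sub-blocks and so reappearing unchanged'' somewhat understates the bookkeeping; in the paper the off-diagonal triple products $N_i'W_{21}H_i$ and $N_iW_{12}H_i'$ are computed explicitly and shown to collapse to $(e_{2+i}e_{2+i}^T)\otimes C$ and $(e_{2+i}e_{2+i}^T)\otimes B$ through genuine cancellation of the $\mathbb{W}_{i-1}$ slices, so a written-out version of your argument would still need that computation.
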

\begin{proof}
\begin{itemize}
\item[(a)] Suppose that $\sig$ has a consecution at $i$. Let us write $\mathbb{N}_i=\left[ \begin{array}{c|c} N_i & \\ \hline & N'_i \end{array} \right],$ where 

$N_i = \left[ \begin{array}{cc} I_n & 0  \\ \lambda \mathbb{N}-{i-1}(i:i+1,1) & \mathbb{N}_{i-1}(1:i+1,1:i+1) \end{array} \right]$  and \\ 
$N'_i = \left[ \begin{array}{cc} I_p & 0 \\ \mathbb{N}_{i-1}(2+i:2i+2,i+2) & \mathbb{N}_{i-1}(2+i:2i+2,2+i:2i+2) \end{array} \right]$,

and 

$\mathbb{H}_i=\left[ \begin{array}{c|c} H_i & \\ \hline & H'_i \end{array} \right],$ where

$H_i = \left[ \begin{array}{cc} 0 & \mathbb{H}_{i-1}(1,1:i+1) \\ -I_m & P_{d_A-i-1}\mathbb{H}_{i-1}(1,1:i+1) \\ 0 & \mathbb{H}_{i-1}(2:i+1,1:i+1) \end{array} \right],$ \\
$H'_i = \mathbb{H}_i := \left[ \begin{array}{cc} 0 & \mathbb{H}_{i-1}(i+2,2+i:2i+2) \\ -I_p & Q_{d_D-i-1}\mathbb{H}_{i-1}(i+2,2+i:2i+2) \\ 0 & \mathbb{H}_{i-1}(3+i:2i+2,2+i:2i+2) \end{array} \right], $

and 
 
$\mathbb{W}_i=\left[ \begin{array}{c|c} W_i & W_{12} \\ \hline W_{21} & W_i' \end{array} \right]$, where
 
$W_i = \left[ \begin{array}{ccc} -A_{i+1} & I_{n} &  0 \\ \mathbb{W}_{i-1}(1:i+1,1) & 0 & \mathbb{W}_{i-1}(1:i+1,2:i+1) \end{array} \right], $ \\
$W_{12} = \left[ \begin{array}{ccc}  0 & 0 &  0  \\ \mathbb{W}_{i-1}(1:i+1, i+2) & 0 & \mathbb{W}_{i-1}(1:i+1, i+3:2i+2) \end{array} \right], $ \\
$W_{21} = \left[ \begin{array}{ccc}	0 & 0 &  0 \\ \mathbb{W}_{i-1}(2+i:2i+2, 1) & 0 & \mathbb{W}_{i-1}(2+i:2i+2,2:i+1)  \end{array} \right], $ \\
$W_i' = \left[ \begin{array}{ccc} -D_{i+1} & I_{p} &  0 \\ \mathbb{W}_{i-1}(2+i:2i+2,i+2) & 0 & \mathbb{W}_{i-1}(2+i:2i+2, i+3:2i+2)  \end{array} \right].$

Now,
\begin{eqnarray*}
& \mathbb{N}_i \left(\left[
              \begin{array}{cc|cc}
                \lam P_{d_{A}-i-2} &  & & \\
                 & \lam I_{\alpha_i'} & & \\
                \hline
               &  & \lam Q_{d_{D}-i-2} & \\
                 &  & & \lam I_{\alpha_i} \\
              \end{array}
            \right]
 -\mathbb{W}_i \right)\mathbb{H}_i
\end{eqnarray*}{\scriptsize
$$= \left[ {\begin{array}{c|c}
N_i &  \\
\hline
 & N_{i}' \\
\end{array}}\right] \left( \left[ {\begin{array}{cc|cc}
 \lambda P_{d_{A}-i-2} & &  \\
 & \lambda I_{\alpha_i'} &  \\
\hline
 &  & \lam Q_{d_{D}-i-2} & \\
 &  & & \lam I_{\alpha_i} \\
\end{array}}\right] - \left[ {\begin{array}{c|c}
 W_i & W_{12} \\ \hline
 W_{21} & W_i'
 \end{array}}\right]\right) \left[ {\begin{array}{c|c}
                                           H_i &  \\
                                           \hline
                                             & H_{i}' \\
                                       \end{array}}\right], $$}
{\scriptsize $$ = \left[ {\begin{array}{c|c}
N_i &  \\
\hline
 &  N_{i}' \\
\end{array}}\right] \left[
                      \begin{array}{c|c}
                          \left[
                          \begin{array}{cc}
                            \lam P_{d_{A}-i-2} &  \\
                             & \lam I_{\alpha_i'} \\
                          \end{array}
                        \right] -W_i  & -W_{12} \\
                        \hline
                        -W_{21} & \left[
                          \begin{array}{cc}
                            \lam Q_{d_{D}-i-2} &  \\
                             & \lam I_{\alpha_i} \\
                          \end{array}
                        \right] -W_i' \\
                      \end{array}
                    \right]
 \left[ {\begin{array}{c|c}
                                           H_i &  \\
                                           \hline
                                             & H_{i}' \\
                                       \end{array}}\right] $$}
{\scriptsize $$ = \left[ {\begin{array}{c|c}
N_i &  \\
\hline
 &  N_{i}' \\
\end{array}}\right] \left[
                      \begin{array}{c|c}                     \left( \left[
                          \begin{array}{cc}
                            \lam P_{k-i-2} &  \\
                             & \lam I_{\alpha_i} \\
                          \end{array}
                        \right] -W_i \right)H_i & -W_{12}H_i' \\
                        \hline
                        -W_{21}H_i & \left( \left[
                          \begin{array}{cc}
                            \lam Q_{d_{D}-i-2} &  \\
                             & \lam I_{\alpha_i} \\
                          \end{array}
                        \right] -W_i' \right)H_i' \\
                      \end{array}
                    \right]$$}
{\scriptsize $$ = \left[
                      \begin{array}{c|c}
                          N_i \left( \left[
                          \begin{array}{cc}
                            \lam P_{d_{A}-i-2} &  \\
                             & \lam I_{\alpha_i} \\
                          \end{array}
                        \right] -W_i \right)H_i &  -N_iW_{12}H_i'\\
                        \hline
                        -N_i'W_{21}H_i & N_i'\left( \left[
                          \begin{array}{cc}
                            \lam Q_{d_{D}-i-2} &  \\
                             & \lam I_{\alpha_i} \\
                          \end{array}
                        \right] -W_i' \right)H_i' \\
                      \end{array}
                    \right].$$}
                    
Now, for $1 \leq i \leq d_D-2$, we have
\begin{align*}
	N_i'W_{21}H_i\\ =& \left[ \begin{array}{cc} I_p & 0 \\ \mathbb{N}_{i-1}(2+i:2i+2,i+2) & \mathbb{N}_{i-1}(2+i:2i+2,2+i:2i+2) \end{array} \right] \\ 
          & \left[ \begin{array}{ccc}	0 & 0 &  0 \\ \mathbb{W}_{i-1}(2+i:2i+2, 1) & 0 & \mathbb{W}_{i-1}(2+i:2i+2,2:i+1)  \end{array} \right] \\        
                         &  \left[ \begin{array}{cc} 0 & \mathbb{H}_{i-1}(1,1:i+1) \\ -I_n & P_{d_A-i-1}\mathbb{H}_{i-1}(1,1:i+1) \\ 0 & \mathbb{H}_{i-1}(2:i+1,1:i+1) \end{array} \right] \\
                         = & \left[\begin{array}{cc} 0 & 0 \\ 0 & Y \end{array} \right],
\end{align*}                    
where
$Y=\mathbb{N}_{i-1}(2+i:2i+2,2+i:2i+2)\mathbb{W}_{i-1}(3:i+3,1)\mathbb{H}_{i-1}(1,i+1)+\mathbb{N}_{i-1}(2+i:2i+2,2+i:2i+2)\mathbb{W}_{i-1}(3:i+3,2:i+1)\mathbb{H}_{i-1}(2:i+1,1:i+1)$
and
$\left[\begin{array}{cc} 0 & 0 \\ 0 & Y \end{array} \right]
= (e_{2+i}e^T_{2+i})\otimes C$,

and
\begin{align*}
	N_iW_{12}H_i' \\  
	      = & \left[ \begin{array}{cc} I_n & 0  \\ \lambda \mathbb{N}-{i-1}(i:i+1,1) & \mathbb{N}_{i-1}(1:i+1,1:i+1) \end{array} \right] \\
             & \left[ \begin{array}{ccc}  0 & 0 &  0  \\ \mathbb{W}_{i-1}(1:i+1, i+2) & 0 & \mathbb{W}_{i-1}(1:i+1, i+3:2i+2) \end{array} \right] \\ 
               & \left[ \begin{array}{cc} 0 & \mathbb{H}_{i-1}(i+2,2+i:2i+2) \\ -I_p & P_{d_D-i-1}\mathbb{H}_{i-1}(i+2,2+i:2i+2) \\ 0 & \mathbb{H}_{i-1}(3+i:2i+2,2+i:2i+2) \end{array} \right] \\
              = & \left[\begin{array}{cc} 0 & 0 \\ 0 & X \end{array}\right],
\end{align*}
where
$X= \mathbb{N}_{i-1}(1:i+1,1:i+1)\mathbb{W}_{i-1}(1:i+1,i+2)\mathbb{H}_{i-1}(i+2,2+i:2i+2)+\mathbb{N}_{i-1}(1:i+1,1:i+1)\mathbb{W}_{i-1}(1:i+1,i+3:2i+2)\mathbb{H}_{i-1}(3+i:2i+2,2+i:2i+2)  $
and
 $\left[\begin{array}{cc} 0 & 0 \\ 0 & X \end{array}\right]
= (e_{2+i}e^T_{2+i})\otimes B$.

For $i=d_D-1:d_{A}-2$, the blocks labelled $W_i$, $N_i$, and $H_i$ are constant. If $\sigma$ has a consecution at $i$ then one block row is added at the top of $\mathbb{W}_i$ and,  hence, one block row is added to the block labelled $W_{12}$ and the block labelled $W_{21}$ remains unchanged. Similarly, if $\sigma$ has an inversion at $i$, one block column is added at the extreme left of the matrix $\mathbb{W}_i$ and, hence, one block column is added to the block labelled $W_{21}$ but the block labelled $W_{12}$ remains unchanged.

Hence, 
\[
N_iW_{12}H'_i=(e_{1+i}e_{d_D})\otimes B,\
N'_iW_{21}H_i=(e_{d_D}e_{1+i})\otimes C.
\]
By Lemma $4.4$ given in \cite{DeDM12a}  and the previous Lemmas in this section, we have
{\scriptsize \begin{eqnarray*}
& \mathbb{N}_i \left(\left[
	\begin{array}{cc|cc}
		\lam P_{d_{A}-i-2} &  & & \\
		& \lam I_{\alpha_i'} & & \\
		\hline
		&  & \lam Q_{d_{D}-i-2} & \\
		&  & & \lam I_{\alpha_i} \\
	\end{array}
	\right]
	-\mathbb{W}_i \right)\mathbb{H}_i
  =  \left[
	\begin{array}{cc|cc}
		I_{\beta_i'}  &  & 0 & 0 \\
		& T_1 & 0 & -B \\
		\hline
		0 &  0 &
		I_{\beta_i}  &     \\
		0 & C & & K_1
	\end{array}
	\right], \end{eqnarray*}}
where $T_1 = N_{i-1} \left( \left[
		\begin{array}{cc}
			\lam P_{d_{A}-i-1} &  \\
			& \lam I_{\alpha_{i-1}} \\
		\end{array}
		\right] -W_{i-1} \right)H_{i-1}$ and $$K_1= N_{i-1}' \left( \left[
		\begin{array}{cc}
			\lam Q_{d_{D}-i-1} &  \\
			& \lam I_{\alpha_{i-1}} \\
		\end{array}
		\right] -W_{i-1}' \right)H_{i-1}'. $$

\item[(b)] Suppose that $\sig$ has a consecution at $0$. Then we have $\alpha_0  = p,  \alpha_0' = n$ and $\beta_0 = p, \beta_0' =n$. Now
\begin{eqnarray*}
& \mathbb{N}_0 \left(\left[
              \begin{array}{cc|cc}
                \lam P_{d_{A}-2} &  & &  \\
                 & \lam I_{\alpha_0'} &  &  \\
                \hline
                  &  & \lam Q_{d_{D}-2} &  \\
                 &  &  & \lam I_{\alpha_0}  \\
              \end{array}
            \right]
 -\mathbb{W}_0 \right)\mathbb{H}_0 \\ & = \mathbb{N}_0 \left(\left[
              \begin{array}{cc|cc}
                \lam P_{d_{A}-2} &  & &  \\
                 & \lam I_{n} &  &  \\
                \hline
                  &  & \lam Q_{d_{D}-2} &  \\
                 &  &  & \lam I_{p}  \\
              \end{array}
            \right]
 -\left[
    \begin{array}{cc|cc}
      -A_1 & I_n & 0 & 0 \\
      -A_0 & 0 & -B & 0 \\
      \hline
      0 & 0 & -D_1 & I_p\\
      -C & 0 & -D_0 & 0 \\
    \end{array}
  \right]
  \right)\mathbb{H}_0  \\
& = \left[
     \begin{array}{cc|cc}
       I_n & 0 & & \\
       \lam I_n & I_n &  \\
       \hline
        &  & I_p & 0 \\
       &  & \lam I_p & I_p  \\
     \end{array}
   \right] \left[
             \begin{array}{cc|cc}
               \lam P_{d_{A}-2}+A_1 & -I_n & 0 & 0 \\
               A_0 & \lam I_{n} & -B & 0 \\
               \hline
               0 & 0 & \lam Q_{d_{D}-2}+D_1 & -I_p  \\
              C & 0 & D_0 & \lam I_{p}  \\
             \end{array}
           \right] \\
          & \hspace{1.5cm} \left[
                     \begin{array}{cc|cc}
                       0 & I_n & &  \\
                       -I_n & P_{d_{A}-1} & & \\
                       \hline
                       & & 0 & I_m \\
                       & & -I_p & Q_{d_{D}-1}  \\
                     \end{array}
                   \right] \\
& = \left[
                     \begin{array}{cc|cc}
                       I_{\beta_0'} &  & 0 & 0 \\
                        & A(\lam) & 0 & -B \\
                       \hline
                       0 & 0 & I_{\beta_0} &  \\
                       0 & C & & D(\lam) \\
                     \end{array}
                   \right],
\end{eqnarray*}
 since $\beta_0' = n$ and
\[
\left[
           \begin{array}{cc}
             I_n & 0 \\
             \lam I_n & I_n \\
           \end{array}
         \right] \left[
                   \begin{array}{cc}
                     P_{d_{A}-1} & -I_n \\
                     A_0 & \lam I_{n} \\
                   \end{array}
                 \right] \left[
                           \begin{array}{cc}
                             0 & I_n \\
                             -I_n & P_{d_{A}-1} \\
                           \end{array}
                         \right] = \left[
                                     \begin{array}{cc}
                                       I_{n} &  \\
                                        & A(\lam) \\
                                     \end{array}
                                   \right], 
\]
since $\beta_0 = p$  and
\[
\left[
           \begin{array}{cc}
             I_p & 0 \\
             \lam I_p & I_p \\
           \end{array}
         \right] \left[
                   \begin{array}{cc}
                     Q_{d_{D}-1} & -I_p \\
                     D_0 & \lam I_{p} \\
                   \end{array}
                 \right] \left[
                           \begin{array}{cc}
                             0 & I_m \\
                             -I_p & Q_{d_{D}-1} \\
                           \end{array}
                         \right] = \left[
                                     \begin{array}{cc}
                                       I_{p} &  \\
                                        & D(\lam) \\
                                     \end{array}
                                   \right].
\]
\end{itemize}
\end{proof}

After these preparations we get our main theorem.
\begin{theorem} \label{LinT}
Let $S(\lam)$ be an $(n+p) \times (n+m)$ RSMP as  in (\ref{rosmatrix}) with $A(\lam) = \sum\limits_{i=0}^{d_A}\lam^{i}A_i\in \C[\lam]^{n, n}$ and $D(\lam) = \sum \limits_{i=0}^{d_D}\lam^{i}D_{i} \in \C[\lam]^{p, m},$ and $d_A \geq d_D$.  For any bijection $\sig  : \{0, 1, \ldots, d_{A}-1\} \rightarrow \{1, 2, \ldots, d_{A}\}$, the Fiedler pencil $\mathbb{L}_{\sig}(\lam)$ is a linearization for $S(\lam)$.
\end{theorem}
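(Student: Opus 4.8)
The plan is to verify Definition~\ref{def:roslin} directly, by exhibiting \emph{block-diagonal} unimodular transformations that reduce $\mathbb{L}_{\sig}(\lam)$ to the form $\diag(I,S(\lam))$. The candidates are $\mathcal{U}_{\sig}(\lam):=\mathbb{N}_{d_A-2}$ and $\mathcal{V}_{\sig}(\lam):=\mathbb{H}_{d_A-2}$, the last matrices produced by Algorithms~\ref{alg3} and~\ref{alg4}. By Lemma~\ref{cohiani}(e) these are unimodular with determinant $\pm1$, and an inspection of the two algorithms shows that every step acts separately on the $A$-blocks and on the $D$-blocks, so $\mathcal{U}_{\sig}(\lam)=\diag(U(\lam),\widetilde{U}(\lam))$ and $\mathcal{V}_{\sig}(\lam)=\diag(V(\lam),\widetilde{V}(\lam))$; hence they have exactly the block-diagonal structure required by Definition~\ref{def:roslin} for a system equivalence in the sense of Definition~\ref{def:syseq}. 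Moreover the off-diagonal blocks of $\mathbb{M}_{\sig}$ are the constant matrices $\pm B,\pm C$ (created in $\mathbb{W}_0$ and never touched again), while those of the leading coefficient $\mathbb{M}_{d_A}$ vanish, so $\mathbb{L}_{\sig}(\lam)$ already has the shape $\left[\begin{smallmatrix}\mathcal A(\lam)&\mathcal B\\ \mathcal C&\mathcal D(\lam)\end{smallmatrix}\right]$ demanded by Definition~\ref{def:roslin}, with $\mathcal A,\mathcal D$ of degree at most $1$ and $\mathcal B,\mathcal C$ constant.

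The heart of the proof is an inductive evaluation of
\[
\mathcal{U}_{\sig}(\lam)\,\mathbb{L}_{\sig}(\lam)\,\mathcal{V}_{\sig}(\lam)=\mathbb{N}_{d_A-2}\bigl(\lam\mathbb{M}_{d_A}-\mathbb{W}_{d_A-2}\bigr)\mathbb{H}_{d_A-2}
\]
by repeated use of Lemma~\ref{rbnihiwi}. One first notes that $\lam\mathbb{M}_{d_A}$ is exactly the block-diagonal Horner pencil standing on the left of Lemma~\ref{rbnihiwi}(a) at level $i=d_A-2$ (with $P_0=A_{d_A}$, and $Q_0=D_{d_D}$ in the frozen-$D$ regime $i\ge d_D-1$). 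Lemma~\ref{rbnihiwi}(a) then peels off one $A$-side block $I_n$ (since $\beta_i'=n$ always) and, when $i\le d_D-2$, one $D$-side block $I_p$ or $I_m$, reducing the problem to the same expression one level lower; iterating down to level $0$, where Lemma~\ref{rbnihiwi}(b) turns the innermost factors into $A(\lam)$ and $D(\lam)$, and collecting the $d_A-1$ accumulated blocks $I_n$ together with the $D$-side identities of total size $q:=\sum_{j=0}^{d_D-2}\beta_j$, one obtains
\[
\mathcal{U}_{\sig}(\lam)\,\mathbb{L}_{\sig}(\lam)\,\mathcal{V}_{\sig}(\lam)=\left[\begin{array}{cc|cc} I_{(d_A-1)n} & & 0 & 0\\ & A(\lam) & 0 & -B\\ \hline 0 & 0 & I_{q} & \\ 0 & C & & D(\lam)\end{array}\right],\qquad q=p\,\mathfrak c(\sig(0:d_D-2))+m\,\mathfrak i(\sig(0:d_D-2)).
\]
A permutation of block rows and columns bringing $I_q$ next to $I_{(d_A-1)n}$ turns the right-hand side into $\diag\bigl(I_{(d_A-1)n+q},S(\lam)\bigr)$, a matrix of the form~\eqref{sysL}; therefore $\mathbb{L}_{\sig}(\lam)$ is a Rosenbrock linearization of $S(\lam)$. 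The equal-degree case $d_A=d_D$ is identical, the recursion merely being shorter (Remarks~\ref{rem:equaldegree} and~\ref{rem:equaldegrees2}), and $d_A=2$ is just Lemma~\ref{rbnihiwi}(b) itself.

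The main obstacle is the bookkeeping concealed in the inductive step. At each level one must check that the three factors $\mathbb{N}_i,\mathbb{W}_i,\mathbb{H}_i$ have conformable block sizes, so that the products are defined and the partitions fit together --- this is precisely what Lemmas~\ref{cowi} and~\ref{cohiani}(a)--(d) supply --- and that the mixed terms $-\mathbb{N}_iW_{12}\mathbb{H}_i'$ and $-\mathbb{N}_i'W_{21}\mathbb{H}_i$ collapse onto the single blocks $(e_{1+i}e_{d_D}^{T})\otimes B$ and $(e_{d_D}e_{1+i}^{T})\otimes C$, as in the proof of Lemma~\ref{rbnihiwi}. Particular care is needed at the transition $i=d_D-1$, where the construction switches from appending two block rows and columns to appending a single one, which must be shown compatible simultaneously on the $\mathbb{W}$-side and on the $\mathbb{N},\mathbb{H}$-side. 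Finally the consecution and the inversion branches have to be carried along in parallel; they are structurally identical, and the sizes of the accumulated identity blocks ($n$ on the $A$-side, $p$ or $m$ on the $D$-side according to $\mathfrak c$ and $\mathfrak i$) are exactly those predicted by Lemma~\ref{cowi}, so --- just as in the square case of \cite{BehB22,AlaB16} --- nothing beyond careful accounting is involved.
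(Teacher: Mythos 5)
Your proposal follows essentially the same route as the paper's own proof: take $\mathcal U_{\sig}=\mathbb N_{d_A-2}$ and $\mathcal V_{\sig}=\mathbb H_{d_A-2}$, peel off identity blocks one level at a time by iterating Lemma~\ref{rbnihiwi}(a) from $i=d_A-2$ down to $i=1$, close off with Lemma~\ref{rbnihiwi}(b), and conclude from unimodularity. The only (cosmetic) differences are that you explicitly note the block-diagonal structure $\mathcal U_{\sig}=\diag(U,\widetilde U)$, $\mathcal V_{\sig}=\diag(V,\widetilde V)$ matching Definition~\ref{def:syseq}, and that you add a final block permutation bringing the two identity blocks together; the paper stops at the $4\times 4$ block form and simply declares the result a linearization. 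Your bookkeeping is in fact a touch more careful than the printed proof: you correctly give the $D$-side identity size as $q=p\,\mathfrak c(\sig(0:d_D-2))+m\,\mathfrak i(\sig(0:d_D-2))$, consistent with Lemma~\ref{cowi}(a), whereas the paper writes $p\,\mathfrak c(\sig)+m\,\mathfrak i(\sig)$, which coincides only when $d_A=d_D$. So the argument is correct and is not a genuinely different approach.
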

\begin{proof}
Recall that $\mathcal{U}_{\sig}(\lam) = \mathbb{N}_{d_{A}-2}$ and $\mathcal{V}_{\sig}(\lam) = \mathbb{H}_{d_{A}-2}$, and that $\mathbb{M}_{\sig} = \mathbb{W}_{d_{A}-2}$. Let $P_k$ and $Q_k$ denote the degree $k$ Horner shifts of $A(\lambda)$ and $D(\lambda),$ respectively. For $i = d_{A}-2$, then by  Lemma \ref{rbnihiwi} $(a)$ we have
{ \[ 
\mathcal{U}_{\sig}(\lam) \mathbb{L}_{\sig}(\lam) \mathcal{V}_{\sig}(\lam) = \mathbb{N}_{d_{A}-2}\left(\lam \left[
                                  \begin{array}{cc|cc}
                                P_0 &  &  & \\
                                & I_{\alpha'_{d_A-2}} & & \\
                                         \hline
                                              &  & Q_0 & \\
                      &  &  & I_{\alpha_{d_D-2}} \\
                       \end{array}
                                         \right]
- \mathbb{W}_{d_{A}-2}\right) \mathbb{H}_{d_{A}-2} 
\]}
{\[ = \left[
                      \begin{array}{cc|cc}
                         I_{\beta'_{d_A-2}}  &  & 0 & 0 \\
                         & T_3  & 0 & -B\\
                        \hline
                      0 & 0 & I_{\beta_{d_D-2}} &  \\
                     0 & C &  &  K_3 \\
                         \end{array}
                    \right],
\]
} where $T_3 = N_{d_A-3} \left( \left[
                          \begin{array}{cc}
                            \lam P_{1} &  \\
                             & \lam I_{\alpha'_{d_A-3}} \\
                          \end{array}
                        \right] -W_{d_A-3} \right)H_{d_A-3}$ and 
$$K_3 =  \begin{array}{cc}
                          N_{d_D-3}' \left( \left[
                          \begin{array}{cc}
                            \lam Q_{1} &  \\
                             & \lam I_{\alpha_{d_D-3}} \\
                          \end{array}
                        \right] -W_{d_D-3}' \right)H_{d_D-3}'  \\
                         \end{array}.$$ 
Again for $i = d_{A}-3$, we have 
{\scriptsize \[\mathcal{U}_{\sig}(\lam) \mathbb{L}_{\sig}(\lam) \mathcal{V}_{\sig}(\lam) = \left[
                                    \begin{array}{cc|c}
                                      I_{\beta'_{d_A-2}} &  &   \\
                                       & I_{\beta'_{d_A-3}} &    \\
                                       &  & \mathbb{N}_{d-4}\left( \left[
                              \begin{array}{cc|cc}
                              \lam P_2 &  &  &   \\
                               & \lam I_{\alpha'_{d_A-4}} &  & \\
                               \hline
                                &  & \lam Q_2 & \\
                                 & &  &  \lam I_{\alpha_{d_D-4}} \\
                                \end{array}
                                 \right]
- \mathbb{W}_{d-4}\right) \mathbb{H}_{d-4}   \\
                                    \end{array}
                                  \right].
\]}
Applying Lemma \ref{rbnihiwi} $(a)$ for $i = d_{A}-4, d_{A}-5, \ldots, 1 $, we have
{\scriptsize\[
\mathcal{U}_{\sig}(\lam) \mathbb{L}_{\sig}(\lam) \mathcal{V}_{\sig}(\lam) = \left[
                                                                                \begin{array}{cc}
                                                                                 I_{\beta'_{d_A-2}+ \beta'_{d_A-3} + \ldots + \beta'_{1}} &  \\
                                                                                   &  \mathbb{N}_{0}\left( \left[
                              \begin{array}{cc|cc}
                              \lam P_{d_A-2} &  &  \\
                               & \lam I_{\alpha'_{0}} &  \\
                               \hline
                                &  & \lam Q_{d_D-2} \\ & & & \lambda I_{\alpha_0}
                                \end{array}
                                 \right]
- \mathbb{W}_{0}\right) \mathbb{H}_{0}\\
                                                                                \end{array}
                                                                              \right].
\]}
Applying Lemma \ref{rbnihiwi} $(b)$, we have
{\scriptsize\[
\mathcal{U}_{\sig}(\lam) \mathbb{L}_{\sig}(\lam) \mathcal{V}_{\sig}(\lam)
= \left[
         \begin{array}{cc|cc}
              I_{\beta'_{d_A-2}+ \beta'_{d_A-2} + \ldots + \beta'_{1}+ \beta'_0} & 0 & 0 & 0 \\
               0 & A(\lam) & 0 & -B \\
                 \hline
              0 & 0 & I_{\beta_{d_D-2}+ \beta_{d_D-2} + \ldots + \beta_{1}+ \beta_0}& 0\\
                       0 & C & 0 & D(\lam) \\
                                              \end{array}
                                    \right].
\]}
Since $I_{\beta'_{d_A-2}+ \beta'_{d_A-3} + \ldots + \beta'_{1} + \beta'_0} = \alpha'_{d_A-2} = n \mathfrak{c}(\sig) + n \mathfrak{i}(\sig)$ and $I_{\beta_{d_D-2}+ \beta_{d_D-3} + \ldots + \beta_{1} + \beta_0} = \alpha_{m-2} = p \mathfrak{c}(\sig) + m \mathfrak{i}(\sig)$, we have
\[
\mathcal{U}_{\sig}(\lam) \mathbb{L}_{\sig}(\lam) \mathcal{V}_{\sig}(\lam) = 
\left[
                          \begin{array}{cc|cc}
                     I_{n \mathfrak{c}(\sig) + n \mathfrak{i}(\sig)} & 0& 0 & 0 \\
                                                 0   & A(\lam) & 0 & -B \\
                                                  \hline
                     0 & 0 & I_{p \mathfrak{c}(\sig) + m \mathfrak{i}(\sig)} & 0\\
                                                 0 & C & 0 & D(\lam) \\
                                            \end{array}
                                             \right],            
                                              \]
which proves that $\mathbb{L}_{\sig}(\lam)$ is a linearization of $S(\lam)$, since $\mathcal{U}_{\sig}(\lam)$ and $\mathcal{V}_{\sig}(\lam)$ are unimodular.
\end{proof}

\begin{corollary}
Let $S(\lam)$  be as in (\ref{rosmatrix}) with $A(\lam) = \sum\limits_{i=0}^{d_A}\lam^{i}A_i\in \C[\lam]^{n, n}$ and $D(\lam) = \sum \limits_{i=0}^{d_D}\lam^{i}D_{i} \in \C[\lam]^{p, m}$ with $d_A\geq d_D$.  Let $\sig : \{0, 1, \ldots, d_{A}-1\} \rightarrow \{1, 2, \ldots, d_{A}\}$ be a bijection and let $\mathcal{U}_{\sig}(\lam)$, $\mathcal{V}_{\sig}(\lam)$, respectively, be the left and right unimodular equivalences associated with $S(\lam)$. Then
\[
\mathcal{U}_{\sig}(\lam) \mathbb{L}_{\sig}(\lam) \mathcal{V}_{\sig}(\lam) = \left[
                              \begin{array}{c|c|c}
                               I_{n \mathfrak{c}(\sig) + n \mathfrak{i}(\sig)} &  \\
                                      \hline                   
                                      & S(\lam) & \\
                                             \hline
                          &  & I_{p \mathfrak{c}(\sig) + m \mathfrak{i}(\sig)} \\
                                                    \end{array}
                                                \right].
\]
\end{corollary}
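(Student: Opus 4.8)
The plan is to obtain the corollary directly from the proof of Theorem~\ref{LinT}, followed by a single block permutation. First I would note that the proof of Theorem~\ref{LinT} does more than assert that $\mathbb{L}_{\sig}(\lam)$ is a linearization of $S(\lam)$: with $\mathcal{U}_{\sig}(\lam)=\mathbb{N}_{d_A-2}$ and $\mathcal{V}_{\sig}(\lam)=\mathbb{H}_{d_A-2}$ it establishes the explicit identity
\[
\mathcal{U}_{\sig}(\lam)\,\mathbb{L}_{\sig}(\lam)\,\mathcal{V}_{\sig}(\lam)
= \left[\begin{array}{cc|cc}
I_{n\mathfrak{c}(\sig)+n\mathfrak{i}(\sig)} & 0 & 0 & 0\\
0 & A(\lam) & 0 & -B\\
\hline
0 & 0 & I_{p\mathfrak{c}(\sig)+m\mathfrak{i}(\sig)} & 0\\
0 & C & 0 & D(\lam)
\end{array}\right],
\]
and by Lemma~\ref{cohiani}(e) the matrices $\mathcal{U}_{\sig}(\lam)$ and $\mathcal{V}_{\sig}(\lam)$ are unimodular. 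Hence the content of the corollary is precisely that the right-hand side above is block-permutation equivalent to $\diag\!\left(I_{n\mathfrak{c}(\sig)+n\mathfrak{i}(\sig)},\,S(\lam),\,I_{p\mathfrak{c}(\sig)+m\mathfrak{i}(\sig)}\right)$.

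Next I would exhibit that permutation. Partition the displayed matrix into four block rows of sizes $n\mathfrak{c}(\sig)+n\mathfrak{i}(\sig)$, $n$, $p\mathfrak{c}(\sig)+m\mathfrak{i}(\sig)$, $p$, and four block columns of sizes $n\mathfrak{c}(\sig)+n\mathfrak{i}(\sig)$, $n$, $p\mathfrak{c}(\sig)+m\mathfrak{i}(\sig)$, $m$. Let $\Pi_L$ be the constant permutation matrix interchanging the third and fourth block rows and $\Pi_R$ the constant permutation matrix interchanging the third and fourth block columns. Left-multiplying by $\Pi_L$ and right-multiplying by $\Pi_R$ moves the blocks $A(\lam)$, $-B$, $C$, $D(\lam)$ into a single contiguous block
\[
\left[\begin{array}{cc} A(\lam) & -B\\ C & D(\lam)\end{array}\right]=S(\lam),
\]
and sends the two identity blocks to the top-left and bottom-right corners, producing exactly $\diag\!\left(I_{n\mathfrak{c}(\sig)+n\mathfrak{i}(\sig)},\,S(\lam),\,I_{p\mathfrak{c}(\sig)+m\mathfrak{i}(\sig)}\right)$.

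Finally, since $\Pi_L$ and $\Pi_R$ are constant and invertible, hence unimodular, the products $\Pi_L\,\mathcal{U}_{\sig}(\lam)$ and $\mathcal{V}_{\sig}(\lam)\,\Pi_R$ are again unimodular, and absorbing $\Pi_L,\Pi_R$ into the left and right equivalences (equivalently, reading $\mathcal{U}_{\sig}$ and $\mathcal{V}_{\sig}$ up to the fixed block permutation determined by the data of $\sig$) yields the stated identity. The argument is essentially bookkeeping: the only point that requires care is matching block sizes, since in the rectangular case the trailing blocks are asymmetric between rows and columns (the third blocks have size $p\mathfrak{c}(\sig)+m\mathfrak{i}(\sig)$ on both sides, but the fourth blocks have sizes $p$ and $m$ respectively), so $\Pi_L$ and $\Pi_R$ are genuinely different permutations. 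This is the main — and rather mild — obstacle; everything substantive has already been carried out in Lemma~\ref{rbnihiwi} and Theorem~\ref{LinT}.
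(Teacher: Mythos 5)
Your proof is correct and supplies the bookkeeping that the paper omits: the Corollary is stated without argument immediately after Theorem~\ref{LinT}. You rightly observe that, taking $\mathcal{U}_{\sig}=\mathbb{N}_{d_A-2}$ and $\mathcal{V}_{\sig}=\mathbb{H}_{d_A-2}$ literally as in the Definition, the product $\mathcal{U}_{\sig}(\lam)\mathbb{L}_{\sig}(\lam)\mathcal{V}_{\sig}(\lam)$ equals the four-block matrix obtained at the end of the proof of Theorem~\ref{LinT}, in which $A(\lam)$, $-B$, $C$, $D(\lam)$ are interleaved with the second identity block rather than contiguous; that matrix is not literally $\diag\!\left(I_{n\mathfrak{c}(\sig)+n\mathfrak{i}(\sig)},\,S(\lam),\,I_{p\mathfrak{c}(\sig)+m\mathfrak{i}(\sig)}\right)$. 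Your block permutations $\Pi_L$ (swapping block rows three and four, of sizes $p\mathfrak{c}(\sig)+m\mathfrak{i}(\sig)$ and $p$) and $\Pi_R$ (swapping block columns three and four, of sizes $p\mathfrak{c}(\sig)+m\mathfrak{i}(\sig)$ and $m$) do produce the contiguous $S(\lam)$ block and the stated form, and your remark that $\Pi_L\neq\Pi_R$ precisely when $p\neq m$ is the genuine dimensional subtlety of the rectangular case. The one caveat is cosmetic but worth noting: for literal equality the Corollary's left and right equivalences should be $\Pi_L\mathcal{U}_{\sig}$ and $\mathcal{V}_{\sig}\Pi_R$ (still unimodular, as you say), so the Corollary as printed involves a mild abuse of notation in reusing the names $\mathcal{U}_{\sig},\mathcal{V}_{\sig}$ for the permuted versions; your argument identifies and resolves this correctly.
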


\begin{remark}
Note that the construction that we have discussed in section~\ref{sec:LFP} to show that the Fiedler pencils of rectangular rational matrix are linearizations, will also work to show that the Fiedler pencils defined in Definition~\ref{def:fiedlerS} for square RSMPs with $B, C$ constant in $\lambda$ are  linearizations. Also, there is another technique,  discussed in \cite{BehB22}, to show that  the Fiedler pencils defined in Definition~\ref{def:fiedlerS} are linearizations for square RSMPs. 
\end{remark}

\section{Conclusion}\label{sec:concl}
The construction of Fiedler pencils for rectangular Rosenbrock  matrix polynomials has been studied and it has been shown that these are linearizations.

\bibliographystyle{plain}
\bibliography{Beh_m}

\section{Appendix}
So far we have only considered the case that $d_A\geq d_D$ in the polynomial degrees. For completeness in this appendix, we present the analogous results and algorithms for the case $d_{A} < d_{D}$.

\begin{algorithm}[H]
	\caption{Construction of $\mathbb{M}_{\sigma}$ for $\mathbb{L}_{\sigma}(\lam) := \lam \mathbb{M}_{d_D}- \mathbb{M}_{\sigma}$.}
\label{alg2}	
	
	\textbf{Input:} $S(\lambda) = \left[\begin{array}{c|c} \sum_{i=0}^{d_A}\lambda^i A_i & -B\\ \hline C & \sum_{i=0}^{d_D}\lambda^i D_i \end{array}\right]$ and a bijection $\sigma : \{0,1,\ldots ,d_D-1\}\to \{1,2,\ldots,d_D\}$
	
	\textbf{Output:} $\mathbb{M}_{\sigma}$
	
	\begin{algorithmic}
		
		\If{$\sigma$ has a consecution at $0$ }
		\State \scriptsize $\mathbb{W}_0=\left[\begin{array}{cc|cc}
			-A_1&I_n&0 &0\\ -A_0&0&B&0 \\ \hline 0&0&-D_1 &I_p \\ -C & 0&-D_0 & 0
		\end{array}\right]$
		
		\Else
		\State \scriptsize$\mathbb{W}_0 =\left[ \begin{array}{cc|cc} -A_1& -A_0 & 0 & B \\ I_n & 0 &0 &0 \\ \hline 0&-C &-D_1 &-D_0 \\ 0&0&I_m &0 \end{array}\right]$	
		
		\EndIf
		
		If $d_D>d_A$ \\
		\For{ $i = 1:d_A-2$ }
		\If {$\sigma$ has a consecution at $i$ }
		\State {\scriptsize
			$\mathbb{W}_i := \left[\begin{array}{ccc|ccc}-A_{i+1}&I_n&\\ \mathbb{W}_{i-1}(1:i+1,1)&0&\mathbb{W}_{i-1}(1:i+1,2:i+1)&W_{12}\\ \hline 0&0&0& \\ \mathbb{W}_{i-1}(2+i:2i+2,1)&0&\mathbb{W}_{i-1}(2+i:2i+2,2:i+1)&W_{22}\end{array}\right], \text{where}$}
		
		\scriptsize$W_{12}=\left[\begin{array}{ccc}0&0&0\\ \mathbb{W}_{i-1}(1:i+1,i+2)&0&\mathbb{W}_{i-1}(1:i+1,i+3:2i+2)\end{array}\right]$
		
		\scriptsize{$W_{22}=\left[\begin{array}{ccc}-D_{i+1} &I_p&0 \\\mathbb{W}_{i-1}(2+i:2i+2,i+2)&0& \mathbb{W}_{i-1}(2+i:2i+2,i+3:2i+2) \end{array}\right]$}
		\Else 
		\State {\scriptsize$\mathbb{W}_i =\left[\begin{array}{cc|cc} -A_{i+1}& \mathbb{W}_{i-1}(1,1:i+1)& 0&\mathbb{W}_{i-1}(1,2+i:2i+2)\\I_n&0&0&0\\ 0& \mathbb{W}_{i-1}(2:i+1,1:i+1)&0&\mathbb{W}_{i-1}(2:i+1,2+i:2i+2)\\ \hline 0&\mathbb{W}_{i-1}(i+2,1:i+1)&-D_{i+1}&\mathbb{W}_{i-1}(i+2,2+i:2i+2)\\0&0&I_m&0\\0&\mathbb{W}_{i-1}(i+3:2i+2,1:i+1)&0&\mathbb{W}_{i-1}(i+3:2i+2,2+i:2i+2)
			\end{array}\right]$}
		
		\EndIf
		\EndFor
		
		\For {$i=d_A -1:d_D -2 $ }
		\If {$\sigma$ has a consecution at $i$} 
		\State {\tiny
			$\mathbb{W}_i = \left[\begin{array}{c|ccc}\mathbb{W}_{i-1}( 1:d_{A},1:d_{A})&\mathbb{W}_{i-1}(1:d_{A},d_{A}+1)&0&\mathbb{W}_{i-1}(1:d_{A},d_{A}+2:d_{A}+i+1)\\ \hline 0&D_{i+1}&I_p&0\\ \mathbb{W}_{i-1}(d_{A}+1:d_{A}+i+1, 1:d_{A})&\mathbb{W}_{i-1}(d_{A}+1:d_{A}+i+1,d_{A}+1)&0&\mathbb{W}_{i-1}(d_{A}+1:d_{A}+i+1,d_{A}+2:d_{A}+i+1)\end{array}\right]$}
		
		\Else
		\State {\scriptsize$\mathbb{W}_i = \left[\begin{array}{c|cc}\mathbb{W}_{i-1}( 1:d_{A},1:d_{A}) & 0 & \mathbb{W}_{i-1}(1:d_{A},d_{A}+1:d_{A}+i+1) \\ \hline \mathbb{W}_{i-1}(d_{A}+1, 1:d_{A})&-D_{i+1}&\mathbb{W}_{i-1}(d_{A}+1,d_{A}+1:d_{A}+i+1)\\
		0 & I_m& 0\\ \mathbb{W}_{i-1}(d_{A}+2:d_{A}+i+1,1:d_{A}) & 0 & \mathbb{W}_{i-1}(d_{A}+2:d_{A}+i+1,d_{A}+1:d_{A}+i+1)\end{array}\right]$} \\
		\EndIf
		
		\EndFor
		
		\State $\mathbb{M}_{\sigma} := \mathbb{W}_{d_{D}-2}$
	\end{algorithmic}
\end{algorithm}

Based on the construction in Algorithm~\ref{alg2} we have the following.

\begin{lemma}\label{cowii}
	Let $S(\lam)$ be as in (\ref{rosmatrix}) with $A(\lam) = \sum\limits_{i=0}^{d_A}\lam^{i}A_i\in \mathbb C[\lam]^{n\times n}$, $D(\lam) = \sum \limits_{i=0}^{d_D}\lam^{i}D_{i}\in \mathbb C[\lam]^{p\times m}$ and suppose that $d_A< d_D$.  For a bijection $\sig$ Algorithm~\ref{alg2} constructs a sequence of matrices $\{\mathbb{W}_0, \mathbb{W}_1, \ldots, \mathbb{W}_{d_{D}-2}\}, $ where each matrix $\mathbb{W}_i$ for $i = 1, 2, \ldots, d_{D}-2$ is partitioned into blocks in such a way that the blocks of $\mathbb{W}_{i-1}$ are blocks of $\mathbb{W}_i. $

Then
\begin{itemize}
	\item[(a)] The size of $\mathbb{W}_i$ for $i = 0:d_{A}-2$ is $\left[\left(n+n c(\sig(0:i)) + n \mathfrak{i}(\sig(0:i))\right) + \left(p+p c(\sig(0:i)) + m \mathfrak{i}(\sig(0:i))\right)\right]  \times \left[\left(n + n c(\sig(0:i)) + n \mathfrak{i}(\sig(0:i))\right) + \left(m + p c(\sig(0:i)) + m \mathfrak{i}(\sig(0:i))\right)\right] $ and for $i = d_{A}-1: d_{D}-2$ is  $\left[d_{A}n + \left(p+p c(\sig(0:i)) + m \mathfrak{i}(\sig(0:i))\right)\right]  \times \left[d_{A}n + \left(m + p c(\sig(0:i)) + m \mathfrak{i}(\sig(0:i))\right)\right]. $
	
	\item[(b)] The $(1,1)$ diagonal block of $\mathbb{W}_{i}$ is $-A_{i+1}\in\mathbb{C}^{n\times n}$ and the $(3+i,3+i)$ block of $\mathbb{W}_{i}$ is $-D_{i+1}\in\mathbb{C}^{p\times m}$  for $i=0:d_{A}-2$ and the $(1,1)$ block of $\mathbb{W}_{i}$ is $-A_{d_{A}-1}\in\mathbb{C}^{n\times n}$ and the $(d_{A}+1,d_{A}+1)$ block of $\mathbb{W}_{i}$ is $-D_{i+1}\in\mathbb{C}^{p\times m}$ for $i=d_{A}-1:d_{D}-2$. The rest of the diagonal blocks of  $\mathbb{W}_i$  are square zero matrices, and more precisely, for $j = 0, 1, \ldots, i$ and $i= 0, 1, \ldots d_{{A}}-2,$
	$\mathbb{W}_i(i+2-j, i+2-j) = 0_n $ and for $j = 0, 1, \ldots, i$ and $i= 0, 1, \ldots d_{{A}}-2,$
$$\mathbb{W}_i(2i+4-j, 2i+4-j) =
\begin{cases}
0_p, & \text{if } \sig \text{ has a consecution at } j \\
0_m, &  \text{if } \sig \text{ has an inversion at } j
\end{cases}, $$ and  for $j=0, 1, \ldots, i$ and $i=d_{A}-1:d_{D}-2$,
    $$\mathbb{W}_{i}(d_{A}+i+2-j,d_{A}+i+2-j)=\begin{cases}
                                     0_p, \,\,\,\,\ &\text{if $\sigma$ has a conseqution at $j$} \\
                                     0_m, &\text{if $\sigma$ has an inversion at $j$}
                                     \end{cases}.$$
    %
	
	\item[(c)] If $\sig$ has a consecution at $i$, then the size of the zero block in $W_{21}$ block of $\mathbb{W}_i$ is $0_{p \times n}$ and if $\sig$ has an inversion at $i$, then the size of the zero block in $W_{12}$ block of $\mathbb{W}_i$  is $0_{n \times m}$.
\end{itemize}
\end{lemma}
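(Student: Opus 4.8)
The plan is to mirror the proof of Lemma~\ref{cowi}, which in turn follows [\cite{DeDM12a}, Th.~3.6], the only structural difference being that for $d_A<d_D$ the roles of the $A$-block and the $D$-block in the growth of the matrices $\mathbb{W}_i$ are interchanged once the index passes $d_A-1$. I would argue by induction on $i$, and, exactly as in Lemma~\ref{cowi}, establish the three assertions (a), (b), (c) simultaneously.

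First I would check the base case. By inspection of Algorithm~\ref{alg2}, $\mathbb{W}_0$ is well-defined in both branches and has size $(2n+2p)\times(2n+m+p)$ if $\sig$ has a consecution at $0$ and size $(2n+m+p)\times(2n+2m)$ if $\sig$ has an inversion at $0$; in either case its $(1,1)$ block is $-A_1$, its $(3,3)$ block is $-D_1$, its remaining diagonal block is a square zero block $0_n$, and the auxiliary off-diagonal zero block in the recipe is $0_{p\times n}$ in the consecution branch and $0_{n\times m}$ in the inversion branch. This gives (a), (b), (c) for $i=0$. For the inductive step in the range $1\le i\le d_A-2$ the argument is verbatim that of Lemma~\ref{cowi}: assuming $\mathbb{W}_{i-1}$ is well-defined and partitioned into $(2i+2)\times(2i+2)$ blocks with the claimed column and row sizes, one verifies that the two recipes produce a well-defined $\mathbb{W}_i$, since $\mathbb{W}_{i-1}(:,1)$ has $n$ columns, $\mathbb{W}_{i-1}(1,:)$ has $n$ rows, $\mathbb{W}_{i-1}(:,i+2)$ has $m$ columns and $\mathbb{W}_{i-1}(i+2,:)$ has $p$ rows, so all recycled submatrices conform with the newly inserted block row and block column built from $-A_{i+1}$, $-D_{i+1}$, $I_n$, $I_p$, $I_m$. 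Each such step adds $n+p$ rows and $n+m$ columns, which yields the size formula of (a) in this range; the new diagonal blocks are $-A_{i+1}$ in position $(1,1)$, $-D_{i+1}$ in position $(3+i,3+i)$, and zero blocks of the stated $0_n$, $0_p$, $0_m$ type elsewhere, giving (b); and the placement of the auxiliary zero block in $W_{12}$ (resp. $W_{21}$) gives (c).

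The step that requires genuine care, and which I expect to be the main obstacle, is the range $d_A-1\le i\le d_D-2$, where no coefficients $A_j$ remain to be appended, so Algorithm~\ref{alg2} grows only the $D$-part and carries the leading $d_An\times d_An$ block $\mathbb{W}_{i-1}(1:d_A,1:d_A)$ through unchanged. Here I would track the block indices carefully: the frozen $(1,1)$ block equals $-A_{d_A-1}$ (the value reached at $i=d_A-2$), the new coefficient $-D_{i+1}$ is inserted into the $(d_A+1,d_A+1)$ block rather than into the $(3+i,3+i)$ block, and, because $D$ is $p\times m$, each step adds one block row of height $p$ and one block column of width $p$ in the consecution branch, and one block row of height $m$ and one block column of width $m$ in the inversion branch; the remaining new diagonal blocks are $0_p$ or $0_m$ at position $d_A+i+2-j$ according to whether $\sig$ has a consecution or an inversion at $j$. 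Accumulating these increments over $i=d_A-1,\ldots,d_D-2$ and using $\mathfrak{c}(\sig(0:d_A-2))+\mathfrak{i}(\sig(0:d_A-2))=d_A-1$ produces the size formula $[d_An+(p+p\mathfrak{c}(\sig(0:i))+m\mathfrak{i}(\sig(0:i)))]\times[d_An+(m+p\mathfrak{c}(\sig(0:i))+m\mathfrak{i}(\sig(0:i)))]$ of (a), together with the corresponding parts of (b), while (c) is read off directly from the recipes. Combining the three ranges $i=0$, $1\le i\le d_A-2$, $d_A-1\le i\le d_D-2$ completes the induction; all the conformality checks are routine once this bookkeeping has been fixed, exactly as in [\cite{DeDM12a}, Th.~3.6].
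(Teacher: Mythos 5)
Your approach is essentially the paper's: the paper's own proof of Lemma~\ref{cowii} reads simply ``analogous to the proof of Lemma~\ref{cowi}'', and your proposal carries out precisely that analogy by induction, correctly identifying the key modification for the range $d_A-1\le i\le d_D-2$ (the $d_A n\times d_A n$ $A$-block is frozen, the new coefficient $-D_{i+1}$ lands in block position $(d_A+1,d_A+1)$, and each step grows only the $D$-part by $p$ rows and $p$ columns on a consecution, or $m$ rows and $m$ columns on an inversion).

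One bookkeeping slip worth flagging: for the middle range $1\le i\le d_A-2$ you assert ``each such step adds $n+p$ rows and $n+m$ columns.'' That increment is not correct and, taken literally, would not reproduce the size formula in (a) unless $p=m$. Reading the recipe off Algorithm~\ref{alg2}, a consecution at $i$ adds $n+p$ rows \emph{and} $n+p$ columns (new block row/column of size $n$ from the $A$-part and new block row/column of size $p$ from the $D$-part), while an inversion at $i$ adds $n+m$ rows \emph{and} $n+m$ columns; the mixed $p\mathfrak{c}(\sig(0:i))+m\mathfrak{i}(\sig(0:i))$ term in (a) arises only after summing these two kinds of increments over $j=0,\dots,i$. (The paper's own proof of Lemma~\ref{cowi} contains essentially the same imprecision.) This is a local error in the accounting, not a flaw in the overall strategy, and is easily repaired.
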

\proof
The proof is analogous to the proof of Lemma~\ref{cowi}.
\eproof

\begin{algorithm}[H] 
\caption{The following algorithm constructs a sequence of matrix polynomials 
$\{\mathbb{N}_{0},\mathbb{N}_{1},\ldots,\mathbb{N}_{d_{D}-2}\}$, where each matrix $\mathbb{N}_{i}$, for $i=1,2,\ldots,d_{D}-2$ is partitioned into blocks in such a way that the blocks of $\mathbb{N}_{i-1}$ are blocks of $\mathbb{N}_{i}$. Let $P_{k}(\lambda)$ and $Q_{k}(\lambda)$ denote the degree $k$ Horner shifts of $A(\lambda)$ and $D(\lambda)$ respectively. For simplicity we drop $\lambda$ from $P_{k}(\lambda)$ and $Q_{k}(\lambda)$ in $\{\mathbb{N}_{i}\}_{i=0}^{d_{D}-2}$. }
\label{alg5}
	 \begin{algorithmic}
		\If {$\sigma$ has a consecution at $0$, }
		\State $\mathbb{N}_0 = \left[
		\begin{array}{cc|cc}
			I_n & 0 & & \\
			\lambda I_n & I_n & & \\
			\hline
			&  & I_p & 0 \\
			&  & \lambda I_p & I_p   \\
		\end{array}
		\right]$
		\Else
		\State $\mathbb{N}_0 = \left[
		\begin{array}{cc|cc}
			0 & -I_n & & \\
			I_n & P_{d_{A}-1} & & \\
			\hline
			&  & 0 & -I_m  \\
			&  & I_p & Q_{d_{D}-1}   \\
		\end{array}
		\right]
		$
		\EndIf
		
		\For {$i=1:d_{A}-2$}
		\If{$\sigma$ has a consecution at $i$}
		\State $\mathbb{N}_i=\left[
		\begin{array}{c|c}
			N_i & \\ \hline
			& N_i'
		\end{array}
		\right]$, where\\
  
		$N_i = \left[ \begin{array}{cc} I_n & 0 \\
			\lambda \mathbb{N}_{i-1}(1:i+1,1) & \mathbb{N}_{i-1}(1:i+1,1:i+1) \end{array} \right]$, \\
		$N_i' = \left[ \begin{array}{cc} I_p & 0 \\           \lambda\mathbb{N}_{i-1}(2+i:2i+2,i+2) & \mathbb{N}_{i-1}(2+i:2i+2,2+i:2i+2) \end{array} \right]$ \\
		
		\Else
		\State $\mathbb{N}_i = \left[
		\begin{array}{c|c}
			N_i & \\ \hline & N_i'
		\end{array}
		\right]$,  where \\
		
		{\scriptsize $N_i = \left[ \begin{array}{ccc} 0 & -I_n & 0 \\ \mathbb{N}_{i-1}(1:i+1,1) & \mathbb{N}_{i-1}(1:i+1,1)P_{d_A-i-1} & \mathbb{N}_{i-1}(1:i+1,2:i+1) \end{array} \right] $, \\
			$N_i' = \left[ \begin{array}{ccc}  0 & -I_m & 0 \\  \mathbb{N}_{i-1}(2+i:2i+2,i+2) & \mathbb{N}_{i-1}(2+i:2i+2,i+2)P_{d_D-i-1} & \mathbb{N}_{i-1}(2+i:2i+2,3+i:2i+2) \end{array} \right]$}

		\EndIf
		\EndFor
		
		\For {$i = d_{A}-1:d_{D}-2$}
		\If{$\sigma$ has a consecution at $i$}
		\State {\scriptsize $\mathbb{N}_i = \left[
		\begin{array}{ccc}
			\mathbb{N}_{i}(1:d_{A},1:d_{A}) & 0 & 0 \\
			0 & I_{p} & 0 \\ 0 & \lambda \mathbb{N}_{i-1}(d_{A}+1:d_{A}+i+1,d_{A}+1) & \mathbb{N}_{i-1}(d_{A}+1:d_{A}+i+1,d_{A}+1:d_{A}+i+1)
		\end{array}
		\right]$}
		\Else 
		\State $\mathbb{N}_{i}=\left[\begin{array}{c|c}
                      \mathbb{N}_{i-1}(1:d_{A},1:d_{A}) &  \\\hline
                        & N_{i}'
                        \end{array}\right]$, where
$$N_{i}' = {\tiny \left[
		\begin{array}{ccc}
			 0 & -I_m & 0 \\
			 \mathbb{N}_{i-1}(d_{A}+1:d_{A}+i+1,d_{A}+1) & \mathbb{N}_{i-1}(d_{A}+1:d_{A}+i+1,d_{A}+1)Q_{d_{D}-i-1} & \mathbb{N}_{i}(d_{A}+1:d_{A}+i+2,d_{A}+2:d_{A}+i+2)
		\end{array}
		\right]}$$
  
		
		\EndIf
		\EndFor
	 \end{algorithmic}
\end{algorithm}

\begin{algorithm}[H] 
\caption{The following algorithm constructs a sequence of matrix polynomials $\{\mathbb{H}_{0},\mathbb{H}_{1},\ldots,\mathbb{H}_{d_{D}-2}\}$, where each matrix $\mathbb{H}_{i}$, for $i=1,2,\ldots,d_{D}-2$ is partitioned into blocks in such a way that the blocks of $\mathbb{H}_{i-1}$ are blocks of $\mathbb{H}_{i}$. Let $P_{k}(\lambda)$ and $Q_{k}(\lambda)$ denote the degree $k$ Horner shifts of $A(\lambda)$ and $D(\lambda)$ respectively. For simplicity we drop $\lambda$ from $P_{k}(\lambda)$ and $Q_{k}(\lambda)$ in $\{\mathbb{H}_{i}\}_{i=0}^{d_{D}-2}$. }
\label{alg6}
  \begin{algorithmic}
  	\If{$\sigma$ has a consecution at $0$}
  	\State $\mathbb{H}_0 := \left[
  	\begin{array}{cc|cc}
  		0 & I_n & & \\
  		-I_n & P_{d_{A}-1} & & \\
  		\hline
  		&  & 0 & I_m \\
  		& & -I_p & Q_{d_{D}-1}  \\
  	\end{array}
  	\right]$
  	
  	\Else
  	\State $\mathbb{H}_0 := \left[
  	\begin{array}{cc|cc}
  		I_{n} & \lambda I_n & & \\
  		0 & I_n &  & \\
  		\hline
  		& & I_{m} & \lambda I_m   \\
  		& & 0 & I_m \\
  	\end{array}
  	\right]$
  	\EndIf
  	
  	\For{$i=1:d_{A}-2$}
  	\If{$\sigma$ has a conseqution at $i$}
  	\State {\scriptsize $\mathbb{H}_i := \left[
  	\begin{array}{cc|cc}
  		0 & \mathbb{H}_{i-1}(1,1:i+1) & & \\
  		-I_n & P_{d_A-i-1}\mathbb{H}_{i-1}(1,1:i+1) & & \\ 
  		0 & \mathbb{H}_{i-1}(2:i+1,1:i+1) & & \\ \hline
  		& & 0 & \mathbb{H}_{i-1}(i+2,2+i:2i+2) \\ 
  		& & -I_p & Q_{d_D-i-1}\mathbb{H}_{i-1}(i+2,2+i:2i+2) \\
  		& & 0 & \mathbb{H}_{i-1}(3+i:2i+2,2+i:2i+2)
  	\end{array}
  	\right]$}
  	
  	\Else
  	\State {\scriptsize $\mathbb{H}_i := \left[
  	\begin{array}{cc|cc}
  		I_n & \lambda\mathbb{H}_{i-1}(1,1:i+1) & & \\ 
  		0 & \mathbb{H}_{i-1}(1:i+1,1:i+1) & & \\ \hline
  		& & I_m & \lambda \mathbb{H}_{i-1}(i+2,i+2:2i+2)\\
  		& & 0 & \mathbb{H}_{i-1}(i+2:2i+2,i+2:2i+2)
  	\end{array}	 
  	\right]$}
  	\EndIf
  	\EndFor

  	\For{$i = d_{A}-1:d_{D}-2$}
  	
  	\If{$\sigma$ has a consecution at $i$}
  	\State {\scriptsize $\mathbb{H}_i := \left[
  	\begin{array}{ccc}
  		\mathbb{H}_{i-1}(1:d_{A},1:d_{A}) & 0 & 0   \\
  		0 & 0 & \mathbb{H}_{i-1}(d_{A}+1,d_{A}+1:d_{A}+i+1)    \\
  		0 & 0 & Q_{d_{D}-i-1}\mathbb{H}_{i-1}(d_{A}+1,d_{A}+1:d_{A}+i+1)  \\
  		0 & 0 & \mathbb{H}_{i-1}(d_{A}+2:d_{A}+i+1,d_{A}+2:d_{A}+i+1) 
  	\end{array}
  	\right]$}
  	
  	\Else
  	\State {\scriptsize  $\mathbb{H}_i :=  \left[
  	\begin{array}{ccc}
  		\mathbb{H}_{i-1}(1:d_{A},1:d_{A}) & 0 & 0  \\
  		0 & I_m & \lambda \mathbb{H}_{i-1}(d_{A}+1,d_{A}+1:d_{A}+i+1)   \\
  		0 & 0 & \mathbb{H}_{i-1}(d_{A}+1:d_{A}+i+1,d_{A}+1:d_{A}+i+1)
  	\end{array}
  	\right]$}
  	\EndIf
  	\EndFor
  \end{algorithmic}
\end{algorithm}
We have the following properties.
\begin{lemma}\label{cohianii}
	Let $S(\lam)$ be as in (\ref{rosmatrix}) with $A(\lam) = \sum\limits_{i=0}^{d_A}\lam^{i}A_i\in \mathbb C[\lam]^{n\times n}$, $D(\lam) = \sum \limits_{i=0}^{d_D}\lam^{i}D_{i}\in \mathbb C[\lam]^{p\times m}$ and suppose that $d_A< d_D$.  Let $\sig: \{0, 1, \ldots, d_{D}-1\} \rightarrow \{1, 2, \ldots, d_{D}\}$ be a bijection and consider 
Algorithms~\ref{alg5} and  \ref{alg6}. Let $\{\mathbb{N}_0, \mathbb{N}_1, \ldots, \mathbb{N}_{d_{D}-2}\} $ and $\{\mathbb{H}_0, \mathbb{H}_1, \ldots, \mathbb{H}_{d_{D}-2}\}$ be given by Algorithms~\ref{alg5} and  \ref{alg6}, respectively.
Consider the sequence of block partitioned matrices $\{\mathbb{W}_i\}_{i=0}^{d_{D}-2}$ constructed by the Algorithm~\ref{alg2}. Then we have the following.
\begin{itemize}
\item[(a)] For $0\leq i \leq d_{D}-2$, and $1 \leq j\leq i+2, $ the number of columns of $\mathbb{N}_i(:, j)$ is equal to the number of rows of $\mathbb{W}_i(j, :)$ so that the product $\mathbb{N}_i(:, j)\mathbb{W}_i(j, :)$ is well defined.

\item[(b)] For $0\leq i \leq d_{D}-2$, and $1 \leq j\leq i+2, $ the number of columns of $\mathbb{W}_i(:, j)$ is equal to the number of rows of $\mathbb{H}_i(j, :) $ so that the product $\mathbb{W}_i(:, j)\mathbb{H}_i(j, :)$ is well defined.

\item[(c)] For $i= 0:d_{A}-2$, the size of $\mathbb{N}_i$ is $\left[\left(n + n c(\sig(0:i)) + n \mathfrak{i}(\sig(0:i)) \right) +\left(p + p c(\sig(0:i)) + m \mathfrak{i}(\sig(0:i))\right) \right] \times \left[\left(n + n c(\sig(0:i)) + n \mathfrak{i}(\sig(0:i)) \right)+\left(p + p c(\sig(0:i)) + m \mathfrak{i}(\sig(0:i)) \right)\right]$ and  for $i= d_{A}-1: d_{D}-2$ the size of $\mathbb{N}_i$ is
\begin{small}
\begin{eqnarray*}
&&\left[\left(n d_{A}\right) + \left(p+p c(\sig(0:i)) + m \mathfrak{i}(\sig(0:i))\right)\right]  \times \\
&&
\left[\left(n d_{A}\right) + \left(p + p c(\sig(0:i)) + m \mathfrak{i}(\sig(0:i))\right)\right].
\end{eqnarray*}
\end{small}

\item[(d)] For $i= 0:d_{A}-2$, the size of $\mathbb{H}_i$ is $\left[\left(n + n c(\sig(0:i)) + n \mathfrak{i}(\sig(0:i))\right) + \left(m + p c(\sig(0:i)) + m \mathfrak{i}(\sig(0:i))\right)\right] \times \left[\left(n + n c(\sig(0:i)) + n \mathfrak{i}(\sig(0:i)) \right) + \left(m + p c(\sig(0:i)) + m \mathfrak{i}(\sig(0:i)) \right)\right]$ and  for $i= d_{A}-1: d_{D}-2$ the size of $\mathbb{H}_i$ is
\begin{small}
\begin{eqnarray*}
&&\left[\left(n d_{A}\right) + \left(m+p c(\sig(0:i)) + m \mathfrak{i}(\sig(0:i))\right)\right]  \times \\
&&
\left[\left(n d_{A}\right) + \left(m + p c(\sig(0:i)) + m \mathfrak{i}(\sig(0:i))\right)\right].
\end{eqnarray*}
\end{small}
 \item[(e)] The matrix polynomials $\mathbb{N}_i$ and $\mathbb{H}_i$ are unimodular with $\det(\mathbb{N}_i) = \pm 1$ and $\det(\mathbb{H}_i) = \pm 1$.
\end{itemize}
\end{lemma}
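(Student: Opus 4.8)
The plan is to prove (a)--(e) simultaneously by induction on $i$, following the same strategy as the proof of Lemma~\ref{cohiani} (which in turn rests on [\cite{DeDM12a}, Lemma~4.2]), the only change being that the block recursions are now read off from Algorithms~\ref{alg2}, \ref{alg5}, and \ref{alg6}, and combined with the size and block-structure information for the sequence $\{\mathbb{W}_i\}$ already supplied by Lemma~\ref{cowii}. The induction hypothesis at stage $i$ should bundle together: the size formulas for $\mathbb{N}_i$ and $\mathbb{H}_i$ in (c) and (d); the assertion that the vector of block-column widths of $\mathbb{N}_i$ equals the vector of block-row heights of $\mathbb{W}_i$, which yields (a); the assertion that the vector of block-column widths of $\mathbb{W}_i$ equals the vector of block-row heights of $\mathbb{H}_i$, which yields (b); the relations $\det\mathbb{N}_i=\pm1$ and $\det\mathbb{H}_i=\pm1$, which is (e); and the auxiliary fact that every submatrix extraction occurring in the passage from stage $i$ to stage $i+1$ in the three algorithms is legal, so that $\mathbb{N}_{i+1}$ and $\mathbb{H}_{i+1}$ are well defined.

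For the base case $i=0$ I would simply display $\mathbb{N}_0$ and $\mathbb{H}_0$ from Algorithms~\ref{alg5} and \ref{alg6}, together with $\mathbb{W}_0$ from Algorithm~\ref{alg2}, in the two subcases that $\sig$ has a consecution at $0$ and that $\sig$ has an inversion at $0$. In each subcase $\mathbb{N}_0$ and $\mathbb{H}_0$ are block diagonal with two diagonal blocks, and each such block is either block unitriangular (consecution) or carries a $\pm I$ in an anti-diagonal block position (inversion); reducing the determinant with the help of these $\pm I$ blocks gives $\det\mathbb{N}_0=\pm1$ and $\det\mathbb{H}_0=\pm1$, which is (e) at $i=0$. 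Reading off the block partitions of $\mathbb{N}_0$, $\mathbb{W}_0$, $\mathbb{H}_0$ and comparing them verifies (a) and (b) at $i=0$, and since $(\mathfrak{c}(\sig(0:0)),\mathfrak{i}(\sig(0:0)))$ equals $(1,0)$ or $(0,1)$, the sizes match (c) and (d) at $i=0$.

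The inductive step splits into the two phases that appear in the algorithms. In phase $1\le i\le d_A-2$, both $A$ and $D$ are still active, and each of Algorithms~\ref{alg2}, \ref{alg5}, \ref{alg6} prepends, relative to stage $i-1$, two new block rows and two new block columns: one pair of size $n$ on the ``state'' side and one pair of size $p$ (if $\sig$ has a consecution at $i$) or $m$ (if $\sig$ has an inversion at $i$) on the ``output'' side. Crucially, the new column blocks of $\mathbb{N}_i$ have the same sizes as the new row blocks of $\mathbb{W}_i$, and the new column blocks of $\mathbb{W}_i$ have the same sizes as the new row blocks of $\mathbb{H}_i$, which is exactly what keeps the width/height vectors matched and thereby propagates (a) and (b); using $\mathfrak{c}(\sig(0:i))=\mathfrak{c}(\sig(0:i-1))+[\,\sig\text{ has a consecution at }i\,]$ together with the analogue for $\mathfrak{i}$, the size increments reproduce the formulas in (c) and (d). For (e), the new $\pm I$ and $I$ blocks created by the algorithms allow one to block-triangularize $\mathbb{N}_i$ and $\mathbb{H}_i$ by determinant-preserving block row and column operations, reducing their determinants to $\pm\det\mathbb{N}_{i-1}=\pm1$ and $\pm\det\mathbb{H}_{i-1}=\pm1$. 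In phase $d_A-1\le i\le d_D-2$, the polynomial $A$ is exhausted: the top-left $d_An\times d_An$ ``state'' block of $\mathbb{N}_i$, $\mathbb{W}_i$, $\mathbb{H}_i$ is merely carried over from stage $i-1$ (it has size $d_An$ because at stage $d_A-2$ the state part has size $\bigl(2+(d_A-2)\bigr)n$), and only one block row and one block column, of size $p$ or $m$, are appended on the output side; repeating the four checks above with this one-sided growth yields the second family of size formulas (those containing $d_An$) in (c) and (d) and preserves (a), (b), (e). Combining the two phases gives (a)--(e) for all $0\le i\le d_D-2$.

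The underlying mathematics is entirely elementary --- block row and column operations, determinant expansions, and counting --- so the only genuine obstacle is the bookkeeping: one must keep the block-column-width vector of $\mathbb{N}_i$, both block vectors of $\mathbb{W}_i$, and the block-row-height vector of $\mathbb{H}_i$ mutually consistent \emph{through} the transition at $i=d_A-1$, where the recursion changes from ``add two block rows and columns'' to ``add one''. The matching survives this transition precisely because at $i=d_A-1$ the frozen state block of all three matrices has the common size $d_An$. Getting this alignment exactly right is what makes (a) and (b) go through (and, downstream, the analogue of Lemma~\ref{rbnihiwi} for the case $d_A<d_D$), so I would devote most of the write-up to a careful tabulation of these size vectors and treat the determinant computations for (e) only briefly.
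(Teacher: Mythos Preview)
Your proposal is correct and aligned with the paper. In fact the paper gives no proof at all for this lemma (and for the analogous Lemma~\ref{cohiani} the entire proof is the single sentence ``The proof follows directly from Lemma~4.2 given in \cite{DeDM12a}''), so your inductive sketch is a considerably more detailed write-up of exactly the argument the paper defers to that reference, adapted to the $d_A<d_D$ recursions of Algorithms~\ref{alg2}, \ref{alg5}, and \ref{alg6}.
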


\begin{lemma}\label{rbnihiwii}
Let $S(\lam)$ be an $(n+p) \times (n+m)$ RSMP as in (\ref{rosmatrix}) with $A(\lam) = \sum\limits_{i=0}^{d_A}\lam^{i}A_i\in \C[\lam]^{n\times n}$ and $D(\lam) = \sum \limits_{i=0}^{d_D}\lam^{i}D_{i} \in \C[\lam]^{p\times m}$ and $d_A < d_D$. Let $\sig  : \{0, 1, \ldots, d_{D}-1\} \rightarrow \{1, 2, \ldots, d_{D}\}$ be a bijection. Let $\{\mathbb{W}_i\}_{i=0}^{d_{D}-2}, \{\mathbb{N}_i\}_{i=0}^{d_{D}-2}$ and $\{\mathbb{H}_i\}_{i=0}^{d_{D}-2}$ be the sequences of block matrices constructed in Algorithms \ref{alg2}, \ref{alg5}, and \ref{alg6}, respectively. Also consider the numbers $\alpha_{i} = p c(\sig(0:i)) + m \mathfrak{i}(\sig(0:i))$, $\beta_{i} = p c(\sig(i)) + m \mathfrak{i}(\sig(i))$ and $\alpha_{i}' = n c(\sig(0:i)) + n \mathfrak{i}(\sig(0:i))$, $\beta_{i}' = n c(\sig(i)) + n \mathfrak{i}(\sig(i))$. Note that $\beta_{i} = p$ if $\sig$ has a consecution at $i$ and $\beta_{i} = m$, if $\sig$ has an inversion at $i$, for $i = 0, 1, 2, \ldots, d_{D}-2$. Then we have the following.
\begin{itemize}
\item[(a)] For $1 \leq i \leq d_{D}-2$ we have
\[
\mathbb{N}_i \left(\left[
              \begin{array}{cc|cc}
                \lam P_{d_{A}-i-2} &  & & \\
                 & \lam I_{\alpha_i'} & & \\
                \hline
                 &  & \lam Q_{d_{D}-i-2}& \\
                 &  &  & \lam I_{\alpha_i} \\
              \end{array}
            \right]
 -\mathbb{W}_i \right)\mathbb{H}_i 
 \]
 {\tiny \[  = \left[
                      \begin{array}{cc|cc}
                         I_{\beta_i}  &  & 0 & 0  \\
                         & N_{i-1} \left( \left[
                          \begin{array}{cc}
                            \lam P_{d_{A}-i-1} &  \\
                             & \lam I_{\alpha'_{i-1}} \\
                          \end{array}
                        \right] -W_{i-1} \right)H_{i-1} & 0 & -B \\
                        \hline
                         0 & 0  &  I_{\beta_i}  & \\
                        0 &  C  &  &
                         N_{i-1}' \left( \left[
                          \begin{array}{cc}
                            \lam Q_{d_{D}-i-1} &  \\
                             & \lam I_{\alpha_{i-1}} \\
                              \end{array}
                        \right]
                        -W_{i-1}' \right)H_{i-1}'  \\
                      \end{array}
                    \right]\]}

 \item[(b)] For $i = 0$
{\scriptsize \[
\mathbb{N}_0 \left(\left[
              \begin{array}{cc|cc}
                \lam P_{d_{A}-2} &  & &  \\
                 & \lam I_{\alpha_0'} &  &  \\
                \hline
                  &  & \lam Q_{d_{D}-2} &  \\
                 &  &  & \lam I_{\alpha_0}  \\
              \end{array}
            \right]
 -\mathbb{W}_0 \right)\mathbb{H}_0 = \left[
                     \begin{array}{cc|cc}
                       I_{\beta_0'} &  & 0 & 0  \\
                        & A(\lam) & 0 & -B \\
                       \hline
                       0 & 0 & I_{\beta_0} &  \\
                       0 & C & & D(\lam) \\
                     \end{array}
                   \right]
\]}
\end{itemize}
\end{lemma}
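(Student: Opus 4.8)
The plan is to run, almost verbatim, the computation behind Lemma~\ref{rbnihiwi}, with the roles of $A(\lam)$ and $D(\lam)$ interchanged, since here $D(\lam)$ carries the larger degree. Concretely, up to index $i=d_A-2$ the $A$- and $D$-blocks of $\mathbb{W}_i,\mathbb{N}_i,\mathbb{H}_i$ grow in parallel exactly as in Algorithms~\ref{alg1},~\ref{alg3},~\ref{alg4}, whereas from index $i=d_A-1$ on only the $D$-blocks grow while the leading $d_An\times d_An$ $A$-blocks stay frozen at the shape they reach at step $d_A-2$; this is what Lemma~\ref{cowii} and Lemma~\ref{cohianii} record. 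Each of the two identities in the statement is then proved by a direct blockwise multiplication of the three factors; no induction is needed at this level (the telescoping is carried out later, in the analogue of Theorem~\ref{LinT}).

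First I would dispose of part~(b). The matrices $\mathbb{W}_0$, $\mathbb{N}_0$, $\mathbb{H}_0$ produced by Algorithms~\ref{alg2},~\ref{alg5},~\ref{alg6} are literally the same as those produced by Algorithms~\ref{alg1},~\ref{alg3},~\ref{alg4}, so the computation is word for word the one in Lemma~\ref{rbnihiwi}(b): expand the product of the three $4\times4$-block factors and use the Horner-shift recurrences~\eqref{hs} for $P_{d_A-1}$ and $Q_{d_D-1}$ to collapse the $A$-part to $I_{\beta_0'}\oplus A(\lam)$ and the $D$-part to $I_{\beta_0}\oplus D(\lam)$, the coupling entries producing the off-diagonal $-B$ and $C$.

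For part~(a) with $1\le i\le d_A-2$ the block shapes of $\mathbb{N}_i,\mathbb{H}_i,\mathbb{W}_i$ coincide with those in Lemma~\ref{rbnihiwi}(a), so I would reproduce that proof unchanged: partition each matrix according to whether $\sig$ has a consecution or an inversion at $i$; multiply out the triple product blockwise; recognize the off-diagonal couplings through the identities $N_iW_{12}H_i'=(e_{2+i}e_{2+i}^T)\otimes B$ and $N_i'W_{21}H_i=(e_{2+i}e_{2+i}^T)\otimes C$; and reduce the two lower-right diagonal blocks to the recursive forms $N_{i-1}\bigl((\lambda P_{d_A-i-1}\oplus\lambda I)-W_{i-1}\bigr)H_{i-1}$ and $N_{i-1}'\bigl((\lambda Q_{d_D-i-1}\oplus\lambda I)-W_{i-1}'\bigr)H_{i-1}'$ by invoking the scalar Horner-shift identity [\cite{DeDM12a}, Lemma~4.4]. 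For $d_A-1\le i\le d_D-2$ I would instead use Lemma~\ref{cowii}(a) and Lemma~\ref{cohianii}: the leading $d_An\times d_An$ sub-blocks of $\mathbb{W}_i,\mathbb{N}_i,\mathbb{H}_i$ agree with those at step $d_A-2$, so the corresponding part of the triple product has already been assembled and equals $I_{n\mathfrak{c}(\sig)+n\mathfrak{i}(\sig)}\oplus A(\lam)$ together with $-B$ (here the index $d_A-i-2$ on $P$ is read, with the same convention as in Lemma~\ref{rbnihiwi}(a), as the frozen identity block $I_{d_An}$), while the $D$-block $N_i'\bigl((\lambda Q_{d_D-i-2}\oplus\lambda I)-W_i'\bigr)H_i'$ telescopes exactly as in the first regime, now driven solely by the Horner shifts of $D(\lam)$. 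In this frozen regime the $B$- and $C$-couplings merely acquire an extra zero block row/column, so the analogues of the identities above hold with the bookkeeping from the second ``for'' loop in the proof of Lemma~\ref{rbnihiwi}(a), and reassembling the diagonal blocks yields the asserted form.

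The hard part will be the size- and index-bookkeeping in the mixed regime $d_A-1\le i\le d_D-2$: one must check carefully that the frozen $d_An\times d_An$ $A$-part of $\mathbb{W}_i$ passes through the left and right multiplications by $\mathbb{N}_i$ and $\mathbb{H}_i$ without mixing with the growing $D$-part, that the single $B$ and $C$ entries land in exactly the positions recorded in the statement, and that the out-of-range Horner-shift index is used consistently with Lemma~\ref{rbnihiwi}(a) and with the proof of Theorem~\ref{LinT}. None of this is conceptually new; each step is the evident counterpart, with $A$ and $D$ exchanged, of a computation already carried out in the case $d_A\ge d_D$.
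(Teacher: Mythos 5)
Your proposal is correct and takes exactly the approach the paper intends: the paper's own proof of Lemma~\ref{rbnihiwii} consists of the single line ``The proof is analogous to the proof of Lemma~\ref{rbnihiwi},'' and your sketch simply spells out that analogy — identical base case, identical parallel regime $1\le i\le d_A-2$, and the mixed regime $d_A-1\le i\le d_D-2$ obtained by swapping the roles of $A$ and $D$ (the $A$-blocks now frozen at size $d_An$ while only the $D$-blocks grow, mirroring the second ``for'' loop of the original proof). The points you flag as requiring care — the out-of-range Horner-shift index $P_{d_A-i-2}$ in the frozen regime, and the verification that the $B$ and $C$ couplings land in the asserted block positions when only one side grows — are precisely the details the paper suppresses under ``analogous,'' and your handling of them is consistent with the conventions already used in Lemma~\ref{rbnihiwi}(a) and in the proof of Theorem~\ref{LinT}.
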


\proof
The proof is analogous to the proof of Lemma~\ref{rbnihiwi}.
\eproof

\begin{theorem}
Let $S(\lam)$ be an $(n+p) \times (n+m)$ system matrix given  in (\ref{rosmatrix}) with $A(\lam) = \sum\limits_{i=0}^{d_A}\lam^{i}A_i\in \C[\lam]^{n\times n}$ and $D(\lam) = \sum \limits_{i=0}^{d_D}\lam^{i}D_{i} \in \C[\lam]^{p\times m}$ with $d_A < d_D$.  Let $\sig  : \{0, 1, \ldots, d_{D}-1\} \rightarrow \{1, 2, \ldots, d_{D}\}$ be a bijection. Then any Fiedler pencil $\mathbb{L}_{\sig}(\lam)$ is a linearization for $S(\lam)$ associated with any bijection $\sig.$
\end{theorem}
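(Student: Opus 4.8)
The plan is to follow the proof of Theorem~\ref{LinT} essentially verbatim, interchanging the roles of $d_A$ and $d_D$ in all index ranges and using the appendix analogues (Lemmas~\ref{cowii}, \ref{cohianii}, \ref{rbnihiwii} and Algorithms~\ref{alg2}, \ref{alg5}, \ref{alg6}) in place of their body counterparts. First I would fix notation: for $d_A<d_D$ the Fiedler pencil is $\mathbb{L}_\sig(\lam)=\lam\mathbb{M}_{d_D}-\mathbb{M}_\sig$ with $\mathbb{M}_\sig=\mathbb{W}_{d_D-2}$, the last matrix produced by Algorithm~\ref{alg2}, and its leading coefficient is the block matrix $\mathbb{M}_{d_D}=\diag(A_{d_A},I_{(d_A-1)n},D_{d_D},I_{p\mathfrak{c}(\sig)+m\mathfrak{i}(\sig)})$ arranged to match the partition of $S(\lam)$; the left and right unimodular equivalence matrix polynomials are $\mathcal{U}_\sig(\lam)=\mathbb{N}_{d_D-2}$ and $\mathcal{V}_\sig(\lam)=\mathbb{H}_{d_D-2}$, the last matrices produced by Algorithms~\ref{alg5} and~\ref{alg6}, which are unimodular by Lemma~\ref{cohianii}(e). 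Hence it suffices to compute the triple product $\mathcal{U}_\sig(\lam)\,\mathbb{L}_\sig(\lam)\,\mathcal{V}_\sig(\lam)$ and to display it, up to a block permutation of rows and columns, as $\diag(I,\,S(\lam),\,I)$.

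Then I would run the telescoping. Writing the term $\lam\mathbb{M}_{d_D}$ of $\mathbb{L}_\sig(\lam)$ as $\diag(\lam P_0,\lam I_{(d_A-1)n},\lam Q_0,\lam I_{p\mathfrak{c}(\sig)+m\mathfrak{i}(\sig)})$ with $P_0=A_{d_A}$, $Q_0=D_{d_D}$ the degree-$0$ Horner shifts of $A(\lam)$ and $D(\lam)$, I would apply Lemma~\ref{rbnihiwii}(a) with $i=d_D-2$: this peels off an $n\times n$ identity in the leading $A$-slot and an identity of size $\beta_{d_D-2}\in\{p,m\}$ in the leading $D$-slot, rewriting the remaining core as $\mathbb{N}_{d_D-3}(\,\cdots-\mathbb{W}_{d_D-3}\,)\mathbb{H}_{d_D-3}$ with the Horner shifts promoted by one degree. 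Iterating Lemma~\ref{rbnihiwii}(a) for $i=d_D-3,d_D-4,\dots,1$, at each step an identity $I_{\beta_i}$ is split off on the $D$-side and, as long as $i\le d_A-2$, also an identity $I_n$ on the $A$-side; once $i\ge d_A-1$ the $A$-Horner shift has been exhausted and the $A$-block is carried through unchanged, exactly as the $D$-block freezes for $i\ge d_D-1$ in the proof of Theorem~\ref{LinT}. After the step $i=1$ the core is $\mathbb{N}_0(\,\diag(\lam P_{d_A-2},\lam I_{\alpha_0'},\lam Q_{d_D-2},\lam I_{\alpha_0})-\mathbb{W}_0\,)\mathbb{H}_0$, and Lemma~\ref{rbnihiwii}(b) turns it into $\diag(I_n,A(\lam),I_{\beta_0},D(\lam))$ together with the coupling blocks $-B$ and $C$. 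Summing the accumulated identities, the $A$-side contributes $(d_A-1)$ blocks $I_n$, hence $I_{(d_A-1)n}$, and the $D$-side contributes one block $I_p$ or $I_m$ at each of the $d_D-1$ positions, hence $I_{p\mathfrak{c}(\sig)+m\mathfrak{i}(\sig)}$. Therefore
\[
\mathcal{U}_\sig(\lam)\,\mathbb{L}_\sig(\lam)\,\mathcal{V}_\sig(\lam)=
\left[\begin{array}{cc|cc}
I_{(d_A-1)n} & 0 & 0 & 0\\
0 & A(\lam) & 0 & -B\\
\hline
0 & 0 & I_{p\mathfrak{c}(\sig)+m\mathfrak{i}(\sig)} & 0\\
0 & C & 0 & D(\lam)
\end{array}\right],
\]
and a block permutation of rows and columns brings the right-hand side to $\diag(I_{(d_A-1)n},\,S(\lam),\,I_{p\mathfrak{c}(\sig)+m\mathfrak{i}(\sig)})$. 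Since $\mathcal{U}_\sig$ and $\mathcal{V}_\sig$ are unimodular, this shows, exactly as in the proof of Theorem~\ref{LinT} and the corollary following it, that $\mathbb{L}_\sig(\lam)$ is a linearization of $S(\lam)$.

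The step I expect to be the main obstacle is the size bookkeeping across the transition at $i=d_A-1$, i.e.\ the point at which the $A$-Horner shift $P_{d_A-i-2}$ runs out. From that index on, only the $D$-part of $\mathbb{W}_i,\mathbb{N}_i,\mathbb{H}_i$ is enlarged, so in those steps only the $D$-side identity blocks of size $p$ or $m$ are split off and there is no companion $n\times n$ $A$-side block; one must verify, using the size statements of Lemma~\ref{cowii}(a) and Lemma~\ref{cohianii}(c)--(d), that the leading coefficient $\mathbb{M}_{d_D}$ and every partially telescoped product $\mathbb{N}_i(\,\cdots-\mathbb{W}_i\,)\mathbb{H}_i$ remain conformable block-for-block through this transition, so that Lemma~\ref{rbnihiwii}(a) may legitimately be invoked at each $i=1,\dots,d_D-2$ and Lemma~\ref{rbnihiwii}(b) at $i=0$. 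Once this conformability is confirmed, the remainder of the argument is a word-for-word transcription of the $d_A\ge d_D$ case.
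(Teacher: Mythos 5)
Your proposal is correct and follows essentially the same path as the paper, whose proof of this appendix theorem consists of a single sentence asserting it is analogous to that of Theorem~\ref{LinT}; you have simply spelled out what the analogy entails, correctly swapping the roles of $d_A$ and $d_D$, invoking the appendix lemmas and algorithms, and identifying the transition at $i=d_A-1$ where the $A$-side Horner shift is exhausted. The resulting reduction to $\operatorname{diag}(I_{(d_A-1)n},\,A(\lam),\,I_{p\mathfrak{c}(\sig)+m\mathfrak{i}(\sig)},\,D(\lam))$ with the $B$ and $C$ couplings, followed by a block permutation to $\operatorname{diag}(I,\,S(\lam),\,I)$, matches the structure obtained in the corollary following Theorem~\ref{LinT}.
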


\proof
The proof is analogous to the proof of Theorem~\ref{LinT}.
\eproof

\end{document}